\definecolor{black}{rgb}{0.0, 0.0, 0.0}
\definecolor{red}{rgb}{1.0, 0.5, 0.5}
\newcommand{\margnote}[1]{
\ifthenelse{\boolean{shownotes}}%
{\marginpar{\raggedright\tiny\texttt{#1}}}%
{}%
}
\newcommand{\hole}[1]{
\ifthenelse{\boolean{shownotes}}%
{\begin{center} \fbox{ \rule {.25cm}{0cm} \rule[-.1cm]{0cm}{.4cm}
\parbox{.85\textwidth}{\begin{center} \texttt{#1}\end{center}} \rule
{.25cm}{0cm}}\end{center}} {} }
\title[Propagation of chaos for aggregation equations]{Propagation of chaos for aggregation equations with no-flux boundary conditions and sharp sensing zones}
\author[Choi]{Young-Pil Choi}
\address[Young-Pil Choi]{\newline Department of Mathematics
    \newline Inha University, 402--751, Incheon, Republic of Korea}
\email{ypchoi@inha.ac.kr}
\author[Salem]{Samir Salem}
\address[Samir Salem]{\newline Centre de Math\'ematiques et Informatique (CMI), \newline
    Universit\'e de Provence, Technop\^ole Ch\^ateau-Gombert, Marseille, France}
\email{samir.salem@univ-amu.fr}
\numberwithin{equation}{section}
\newtheorem{theorem}{Theorem}[section]
\newtheorem{lemma}{Lemma}[section]
\newtheorem{proposition}{Proposition}[section]
\newtheorem{remark}{Remark}[section]
\newtheorem{definition}{Definition}[section]
\newcommand{\R}{\mathbb R}
\newcommand{\pp}{\mathcal P}
\newcommand{\e}{\varepsilon}
\newcommand{\V}{\mathcal{V}}
\newcommand{\lal}{\langle}
\newcommand{\ral}{\rangle}
\newcommand{\mc}{\mathcal{C}}
\newcommand{\lt}{\left}
\newcommand{\rt}{\right}
\newcommand{\pa}{\partial}
\newcommand{\mb}{\mathbf{1}}
\newcommand{\bq}{\begin{equation}}
\newcommand{\eq}{\end{equation}}
\newcommand{\om}{\Omega}
\newcommand{\frm}{\frac{1}{2m}}
\newcommand{\LL}{\mathcal{L}}
\newcommand{\IP}{\mathbb{P}}
\newcommand{\E}{\mathbb{E}}
\newcommand{\mo}{\mathcal{O}}
\DeclareMathOperator*{\esssup}{ess\,sup}
\begin{document}
\allowdisplaybreaks

\date{\today}

\keywords{Mean-field limit, diffusion, collective behavior, aggregation equation, no-flux boundary condition, propagation of chaos, sensitivity region}

\begin{abstract} We consider an interacting $N$-particle system with the vision geometrical constraints and reflected noises, proposed as a model for collective behavior of individuals. We rigorously derive a continuity-type of mean-field equation with discontinuous kernels and the normal reflecting boundary conditions from that stochastic particle system as the number of particles $N$ goes to infinity. More precisely, we provide a quantitative estimate of the convergence in law of the empirical measure associated to the particle system to a probability measure which possesses a density which is a weak solution to the continuity equation. This extends previous results on an interacting particle system with bounded and Lipschitz continuous drift terms and normal reflecting boundary conditions by Sznitman[J. Funct. Anal., 56, (1984), 311--336] to that one with discontinuous kernels.
\end{abstract}

\maketitle \centerline{\date}

\tableofcontents

%
%
%
%
\section{Introduction}\label{intro}

Mathematical modelling of collective behaviors, such as flocks of birds, schools of fish, or aggregation of bacteria, etc, has received a bulk of attention because of its possible applications in the field of engineering, biology, industry, and sociology \cite{Aoki, LPLSFD, PGE, TBL, VCBCS}. These models are usually based on incorporating different mechanisms of interactions between individuals, for instance, a short-range repulsion, a long-range attraction, and an alignment in certain spatial regions. We refer the reader to \cite{CCH, CCP, CHL} and the references therein for recent surveys of collective behavior models. In this current work, we consider the continuity-type model for collective behavior in the presence of diffusion. More precisely, we are interested in the propagation of chaos for interacting diffusing particles with reflecting boundary conditions describing collective behavior of individuals with vision geometrical constraints. Let $(\Omega,\mathcal{F},(\mathcal{F}_t)_{t\geq 0},\mathbb{P})$ be a stochastic basis endowed with a filtration $(\mathcal{F}_t)_{t \geq 0}$. On that stochastic basis let $\{ B^i_t\}_{i=1}^N$ be $N$ independent $d$-dimensional Brownian motions and $\mathcal{O}$ be a bounded open convex set of $\R^d$ with Lipschitz boundary $\pa\mo$. In this setting, our main stochastic integral equations(in short, SIEs) are given by
\begin{equation}\label{sys_sde_R}
\left\{ \begin{array}{ll}
\displaystyle X_t^i = X_0^i+\int_0^t V[\mu_s^N](X_s^i)\,ds+\sqrt{2\sigma}B_t^i-K_t^i, \quad i=1,\cdots, N, \quad t > 0,&\\[2mm]
\displaystyle K_t^i=\int_0^t n
(X_s^i)\,d|K^i|_s,  \quad |K^i|_t=\int_0^t \mb_{\pa \mathcal{O}}
(X^i_s)\,d|K^i|_s, 
\end{array} \right.
\end{equation}
where  $\mu_s^N:=\frac{1}{N}\sum_{i=1}^{N}\delta_{X_s^i}$ and $n(x)$ denotes the outward normal to $\pa \mo$ at the point $x \in \pa\mo$. Here $X_t^i$ is the position of $i$-th particle at time $t \geq 0$,  $K_t^i$ is a $\R^d$-bounded total variation process called the reflecting force, and $V[\mu]$ represents the velocity field non-locally computed in terms of the density:
\bq\label{def_V}
V[\mu](x) = \int_{\R^d} \nabla \varphi(x-y)\mb_{K(w(x))}(y-x)\mu(dy) \quad \mbox{for} \quad \mu \in \pp(\R^d),
\eq
where $\mb_{K(w(x))}$ is the indicator function on the set $K(w(x))\subset \R^d$ and $w$ is an orientational field, and $\nabla \varphi$ is a bounded Lipschitz interaction field.

As the total number of individuals gets large, the particle system leads to a macroscopic description based on the evolution of the probability density by means of mean-field limit. The rigorous derivation of the mean-field equation is well studied for sufficiently regular forces \cite{CCR2, dobru}, and it is extended to the equations with non-Lipschitz forces and noises in \cite{BCC} under some uniform moment bounds conditions. For the deterministic particle system with singular kernels, the rigorous derivation of continuum descriptions is studied in \cite{CCH, HJ}. For the system \eqref{sys_sde_R} without noises and reflecting forces, the rigorous derivation of mean-field limit model can be obtained by employing the similar strategy as in \cite{CCHS}, in which the second order collective behavior models with sharp sensitivity regions are considered. 

Solving the SIEs \eqref{sys_sde_R} is known as Skorokhod problem \cite{Sko}. This kind of problem is studied in \cite{Tan} where a convex domain is considered, and then it is extended to a general domain satisfying some admissible conditions in \cite{Li-Si}. Here, the admissibility roughly means that the domain can be approximated by smooth domains in a certain sense, see \cite[p. 521]{Li-Si}. Later, those conditions on the domain are removed in \cite{Sai} by employing the strategy used in \cite{Tan} and approximating the Skorokhod equation. It is worth mentioning that so far Skorokhod problems related to the propagation of chaos are only studied when that system has drift or force terms regular enough, see \cite{Mc}, i.e., those are bounded and Lipschitz continuous, to the best of  knowledge of the authors. 
Moreover, the rigorous derivation of mean-field limit of stochastic differential equations(in short, SDEs) with reflecting boundary conditions are only studied in \cite{Si}, see \cite{HS} for the propagation of chaos of one dimensional Vlasov-Poisson-Fokker-Planck system. 

The main purpose of this paper is to extend the result in \cite{Si}, where stochastic differential equations with reflecting boundary conditions and bounded Lipschitz velocity fields are taken into account, to the case of discontinuous velocity fields. To be more precise, we will show that the $N$ interacting processes $(X_t^i)_{i=1}^N$ of the system \eqref{sys_sde_R} well approximates as $N \to \infty$ the processes $(Y_t^i)_{i=1}^N$ to the following nonlinear SIEs:
\begin{equation}\label{sys_NLS_R}
\left\{ \begin{array}{ll}
\displaystyle Y_t^i = Y_0^i+\int_0^t V[\rho_s](Y_s^i)\,ds+\sqrt{2\sigma}B_t^i- \widetilde K_t^i, \quad \mathcal{L}(Y_t^i)=\rho_t, \quad i=1, \cdots, N, \quad t \geq 0,&  \\[2mm]
\displaystyle \widetilde K_t^i=\int_0^t n
(Y_s^i)\,d|\widetilde K^i|_s,  \quad |\widetilde K^i|_t=\int_0^t \mb_{\pa\mathcal{O}}
(Y^i_s)\,d|\widetilde K^i|_s, & \\[3mm]
Y_0^i = X_0^i, \quad i = 1,\cdots,N. &
\end{array} \right.
\end{equation} 
By a straightforward application of It\^o's formula, we find that the probability density function $\rho_t$ is determined by a continuity type equation of the form:
\bq\label{1_pde_R}
\left\{ \begin{array}{ll}
\pa_t \rho_t + \nabla \cdot (\rho_t V[\rho_t]) = \sigma \Delta \rho_t, \quad x \in \mathcal{O}, \quad t > 0, &\\[2mm]
\displaystyle V[\rho_t](x) = \int_{\mathcal{O}} \nabla \varphi(x-y)\mb_{K(w(x))}(y-x) \rho_t(dy), &
\end{array} \right.
\eq
with the following initial data and boundary conditions:
\[
\rho_t(x)|_{t=0} =:\rho_0(x), \quad x \in \mathcal{O} \quad \mbox{and} \quad \lt\lal \sigma \nabla \rho- \rho V[\rho], n \rt\ral = 0 \quad \mbox{on} \quad \pa \mathcal{O}.
\]
The equation \eqref{1_pde_R} with the set $K \equiv \R^d$, i.e., without vision geometrical constrains, is of the classical form, usually called the {\it aggregation equation} \cite{CCH, TBL}. On the other hand, due to the presence of vision geometrical constraints in \eqref{1_pde_R} which comes from the cutoff interaction function $\mb_{K}$ in the velocity field $V$, the individuals at position $x$ are only interacting with others inside the region $K(w(x))$. Considering the vision geometrical constraints is quite natural in the modelling of animal and human behavior, and realistic modelling of collective behaviors should deal with that, see \cite{AIR, CCHS, PT}. In order to show the convergence of some probability measure, as well as for stability estimate for nonlinear PDEs, we use the Wasserstein distance which is defined by
\[
\mathcal{W}_p^p(\mu, \nu) := \inf_{\xi \in \Gamma(\mu,\nu)} \lt(\int_{\R^d \times \R^d} |x-y|^p \xi(dx,dy)\rt) = \inf_{(X\sim \mu,Y \sim \nu)} \E[|X-Y|^p],
\]
for $p \geq 1$ and $\mu, \nu \in \pp_p(\R^d)$, where $\Gamma(\mu,\nu)$ is the set of all probability measures on $\R^d\times \R^d$ with first and second marginals $\mu$ and $\nu$, respectively, and $(X,Y)$ are all possible couples of random variables with $\mu$ and $\nu$ as respective laws. Given the types of diffusions we are taking into account in this current work, quadratic Wasserstein distances, for instance $\mathcal{W}_{2n}$ with $n \in \mathbb{N}$, could seem more convenient. Notice that the propagation of chaos for a system of interacting diffusing particles with normal reflection boundary condition is proved in \cite{Si} by making use of Wasserstein distance of order $4$. However it has already been pointed out in \cite[Remark 3.1]{CCHS} by the authors and their collaborators that the strategy used to deal with those discontinuous kernels does not work in Wasserstein distance of order $p$ with $p \in(1,\infty)$. Thus, the use of either $\mathcal{W}_1$ or $\mathcal{W}_\infty$ becomes essential in our framework due to the form of velocity fields. Compared to the work in \cite{CCHS}, in which the sensitivity region is independent of the position, the singularity of the interaction function is somehow stronger due to the position dependency of the set $K$, and this constrains us to use the infinite Wasserstein distance $\mathcal{W}_\infty$ defined as
\[
\mathcal{W}_{\infty}(\mu,\nu):=\inf_{\xi \in \Gamma(\mu,\nu)} \xi\mbox{-} \esssup_{(x,y) \in \R^d\times \R^d}|x-y|=\inf_{(X\sim \mu,Y \sim \nu)} \IP\mbox{-}\esssup|X-Y| .
\]

Here we introduce several notations used throughout the paper. $|\cdot|$ and $\lal \cdot, \cdot \ral$ denote the Euclidean distance and the standard inner product on $\R^d$, respectively. We also use the notation $|\cdot|$ for the Lebesgue measure of some set or the cardinal of finite index sets when there is no confusion. $\pp(\mo)$ and $\pp_p(\mo)$ stand for the sets of all probability measures and probability measures with finite moments of order $p \in [1,\infty)$ on $\mo$, respectively. The notation for a probability measure and its probability density is often abused for notational simplicity. For a function $f(x)$, $\|f\|_{L^p}$ represents the usual $L^p(\mo)$-norm, and $\|f\|_{L^p\cap L^{q}}:=\|f\|_{L^p}+\|f\|_{L^{q}}$. For $p \in [1,\infty]$ and $T > 0$, $L^p(0,T; E)$ is the set of the $L^p$ functions from an interval $(0,T)$ to a Banach space $E$. We also denote by $C$ a generic positive constant. For a set $A \subset \R^d$, $\overline A$ represents the closure of $A$.

The rest of this paper is organized as follows. In the next subsection, we will give precise statements of our main results on the existence of solutions to the SIEs and the partial differential equations(in short, PDEs), and the propagation of chaos under suitable assumptions on the sensitivity regions. In Section \ref{sec:ext_par}, we present a global existence of solutions to the particle systems \eqref{sys_sde_R}. Section \ref{sec:ext_sde} is devoted to provide the existence and uniqueness of solution to the PDE and its associated nonlinear SIEs. In Section \ref{sec:mf}, we give the details of proof for the propagation of chaos for the systems \eqref{sys_sde_R} with the aid of law of large number like estimates. In Appendix \ref{app_gron}, we provide two Gronwall's type inequalities to be used in the proof. Finally, Appendix \ref{app_b} is devoted to study a representation for solutions of the equation \eqref{1_pde_R} giving some relations between uniqueness of solutions to the SDEs and the PDE, which complements the proof of Theorem \ref{thm_SDE} below.

%
%

\subsection{Main results}
We first introduce several notations for the set valued function $x\in \R^d \mapsto K(x)\subseteq \R^d$.
\begin{definition}Let $K \subset \R^d$ be a non-empty compact set and $\e >0$. We define the $\e$-boundary of $K$ by:
\[
\partial^{\e}K :=\left\{x+y \ | \ x\in \partial K, |y|\leq \e \right\},
\] 
and also the $\e$-enlargement(resp. $\e$-reduction) $K^{\e,+}$ (resp. $K^{\e,-}$) by
\[
K^{\e,+}:= K \cup \partial^{\e}K \quad \mbox{and} \quad K^{\e,-}:=K \setminus \partial^{\e}K.
\]
Note that $\pa^\e K = K^{\e,+} \setminus K^{\e,-}$ and $(\pa^\e K)^{\delta,+} \subset \pa^{\e + \delta}K$ for $\e > 0$ and $\delta > 0$.
\end{definition}
Using those notations for $K$ together with the so called {\it rope argument} used in \cite{Hauray} for the propagation of chaos of Vlasov-Poission system in one dimension, we present a useful estimate for the cut off interaction function. We refer to \cite[Lemma 2.2]{CCHS} for details of the proof.
 \begin{lemma}\label{lem_est2}For $K\subset \mathbb{R}^d$ For $x_1,y_1,x_2,y_2 \in \R^d$, we have
\[
|\mb_K(y_1 - x_1) - \mb_K(y_2 - x_2)| \leq \mb_{\pa^{2|x_1 - x_2|}K}(y_1 - x_1) + \mb_{\pa^{2|y_1 - y_2|}K}(y_1 - x_1).
\]
\end{lemma}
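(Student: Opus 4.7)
The plan is to implement the rope argument suggested by the reference to \cite{Hauray}: the key observation is that any straight segment joining a point of $K$ to a point of its complement must cross $\partial K$, so a disagreement between the two indicators forces $y_1-x_1$ to lie close to $\partial K$, where ``close'' is quantified by the displacement between the two evaluation points of the indicator.

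First I reduce to the nontrivial case. If $\mb_K(y_1-x_1)=\mb_K(y_2-x_2)$ the inequality is obvious, so assume the two values differ. Setting $z_1:=y_1-x_1$ and $z_2:=y_2-x_2$, one of these points lies in $K$ and the other outside, so by connectedness the segment $[z_1,z_2]$ meets $\partial K$ at some point $p$. The triangle inequality then gives
\[
|z_1 - p| \le |z_1 - z_2| = |(y_1-y_2)-(x_1-x_2)| \le |x_1-x_2| + |y_1-y_2|,
\]
which by the very definition of the $\e$-boundary means $z_1 \in \pa^{|x_1-x_2|+|y_1-y_2|}K$.

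To finish I split on which of $|x_1-x_2|$ or $|y_1-y_2|$ is larger. If $|x_1-x_2| \ge |y_1-y_2|$ then $|x_1-x_2|+|y_1-y_2| \le 2|x_1-x_2|$ so $z_1 \in \pa^{2|x_1-x_2|}K$; otherwise $z_1 \in \pa^{2|y_1-y_2|}K$. In either case the right-hand side of the asserted inequality, evaluated at $y_1-x_1$, is at least $1$, matching the left-hand side. The only real subtlety, and the main (modest) obstacle, is ensuring the segment-crossing argument is valid for an arbitrary non-empty compact $K$ with no assumption on openness; this is handled by applying the intermediate value / connectedness theorem to the continuous path $t\mapsto z_1+t(z_2-z_1)$, $t\in[0,1]$, using that $\partial K$ separates $K^\circ$ from $(\overline K)^c$. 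The rest is elementary bookkeeping with the triangle inequality.
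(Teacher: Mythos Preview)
Your argument is correct and is precisely the ``rope argument'' that the paper invokes (and for which it refers to \cite[Lemma~2.2]{CCHS} rather than giving its own proof): one shows that a disagreement of the two indicators forces the segment $[z_1,z_2]$ to meet $\partial K$, bounds the distance from $z_1$ to that crossing point by $|x_1-x_2|+|y_1-y_2|$, and then splits on which displacement dominates. Your handling of the topological point (that the segment must meet $\partial K$ for an arbitrary $K$, via the decomposition $\R^d\setminus\partial K = K^\circ \cup (\overline K)^c$ and connectedness) is fine.
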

In this paper, we consider that the set valued function $K$ satisfying the following conditions:
\begin{itemize}
\item[${\bf (H1)}$] $K$ is globally compact, i.e., $K(x)$ is compact and there exists a compact set $\mathcal{K}$ such that $K(x)\subseteq \mathcal{K}\,, \forall \,x\in \mathbb{R}^d$.
\item[${\bf (H2)}$] There exists a family of closed sets $x \mapsto \Theta(x)$  and a constant $C$ independent $\e>0$ such that:
\begin{itemize}
\item[(i)] $\partial K(x) \subseteq \Theta(x) $, for all $x \in \R^d$,
\item[(ii)] $\sup_{x\in \R^d}|\Theta(x)^{\e,+}| \le C \e$, for all $\e \in (0,1)$,
\item[(iii)] $K(x) \Delta K(x') \subseteq \Theta(x)^{C |x-x'|,+}$ for $x,x' \in \R^d$,
\item[(iv)] $\Theta(x) \subseteq \Theta(x')^{C |x-x'|,+}$ for $x,x' \in \R^d$.
\end{itemize}
\end{itemize}
The set-valued function $\Theta$ in the above is a kind of generalized boundary of the set $K$. It is introduced in \cite{CCHS}, where the sensitivity set $K$ depends on the velocity variable, in order to give a sense to the time-derivative of the particle trajectories when they cross the boundary of $K$. It is also used to consider the vision cone with varying angles with respect to the speed. For more details, we refer to \cite[Section 2]{CCHS}.

In the next subsection, we provide several examples of sensitivity sets satisfying the above assumptions. 
\begin{remark}\label{rmk_sym}If $w$ is Lipschitz, then it follows from ${\bf (H1)}$-${\bf (H2)}$ that
$$\begin{aligned}
|K(w(x)) \Delta K(w(x'))| & \leq |K(w(x)) \Delta K(w(x'))|\lt(\mb_{C\|w\|_{Lip}|x-x'| < 1} + \mb_{C\|w\|_{Lip}|x-x'| \geq 1}\rt)\cr
&\leq \lt|\Theta(w(x))^{C |w(x)-w(x')|,+} \rt|\mb_{C\|w\|_{Lip}|x-x'| < 1}+2|\mathcal{K}|\mb_{C\|w\|_{Lip}|x-x'| \geq 1}\cr
&\leq C\|w\|_{Lip}|x-x'|,
\end{aligned}$$
for $x,x' \in \R^d$.
\end{remark}

We now present the main results of this paper. First we are concerned with the global existence of weak solutions to the SIEs \eqref{sys_sde_R}. For this, we recall the definition of weak solutions for the SIEs \eqref{sys_sde_R}.
\begin{definition}\cite{Par} A weak solution of the stochastic integral equation \eqref{sys_sde_R} with initial data $\mathcal{X}^N_0:=(X^1_0,\cdots,X^N_0)$ is a couple $(\mathcal{X}^N_t,\mathcal{B}_t^N,\mathcal{K}^N_t )_ {t\geq 0}$,  $(\Omega,\mathcal{F},(\mathcal{F}_t)_{t\geq 0},\mathbb{P})$ such that
	
	\begin{itemize}
		\item[(i)] $(\Omega,\mathcal{F},(\mathcal{F}_t)_{t\geq 0},\mathbb{P})$ is some stochastic basis,
		
		\item[(ii)] $\mathcal{B}_t^N = (B_t^1, \cdots, B_t^N)$ is a $dN$ dimensional $(\mathcal{F}_t)$-Brownian motion under $\mathbb{P}$, $(\mathcal{X}^N_t ,\mathcal{K}^N_t )$ are some $(\mathcal{F}_t)$-adapted processes,

		\item[(iii)] $\mathbb{P}$-almost surely $\mathcal{X}^N_t,\mathcal{B}^N_t,\mathcal{K}^N_t $ satisfy \eqref{sys_sde_R}.
	
	\end{itemize}
\end{definition}

\begin{theorem}\label{thm_SIE} Let $N \geq 2$. For any initial data $\mathcal{X}^N_0 \in \overline\mo^N$ and for any $T > 0$, there exists at least one weak existence for the system \eqref{sys_sde_R} on the time interval $[0,T]$.
\end{theorem}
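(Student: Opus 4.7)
The key observation is that although $V[\mu^N_s](\cdot)$ is discontinuous because of the cutoff $\mathbf{1}_{K(w(\cdot))}$, it is bounded by $\|\nabla\varphi\|_\infty$ uniformly in $\mu$. This is exactly the setting in which weak existence can be obtained via Girsanov's transformation from a system of independent normally reflected Brownian motions, with no regularization, no tightness argument, and no appeal to the structural assumption ${\bf (H2)}$ beyond measurability.

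First I would invoke the classical Skorokhod problem on the convex Lipschitz domain $\overline{\mathcal{O}}$: on some stochastic basis $(\Omega,\mathcal{F},(\mathcal{F}_t)_{t\geq 0},\mathbb{P})$ carrying $N$ independent $d$-dimensional $(\mathcal{F}_t)$-Brownian motions $B^1,\ldots,B^N$, the strong existence and uniqueness of $N$ independent reflected Brownian motions
\[
Y^i_t = X^i_0 + \sqrt{2\sigma}\,B^i_t - K^i_t, \quad K^i_t = \int_0^t n(Y^i_s)\,d|K^i|_s, \quad |K^i|_t = \int_0^t \mathbf{1}_{\partial\mathcal{O}}(Y^i_s)\,d|K^i|_s,
\]
issued from $Y^i_0 = X^i_0 \in \overline{\mathcal{O}}$ follows from Tanaka \cite{Tan} (see also \cite{Li-Si,Sai}). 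In particular each $Y^i$ is $(\mathcal{F}_t)$-adapted with continuous paths in $\overline{\mathcal{O}}$, and $|K^i|_T < \infty$ almost surely.

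Next I would set $\nu^N_s := \frac{1}{N}\sum_{j=1}^N \delta_{Y^j_s}$ and introduce the proposed drift
\[
b^i_s := V[\nu^N_s](Y^i_s) = \frac{1}{N}\sum_{j=1}^N \nabla\varphi(Y^i_s - Y^j_s)\,\mathbf{1}_{K(w(Y^i_s))}(Y^j_s - Y^i_s),
\]
which is $(\mathcal{F}_s)$-progressively measurable (the indicator is a Borel function of $(Y^i_s,Y^j_s)$ under the standing measurability of $K$ and $w$) and satisfies $|b^i_s| \leq \|\nabla\varphi\|_\infty$. This uniform bound makes Novikov's criterion trivial, so
\[
Z_T := \exp\!\left( \sum_{i=1}^N \int_0^T \tfrac{1}{\sqrt{2\sigma}}\,b^i_s \cdot dB^i_s \; - \; \tfrac{1}{4\sigma}\sum_{i=1}^N \int_0^T |b^i_s|^2\,ds \right)
\]
is a genuine $(\mathcal{F}_t)$-martingale and defines a probability $\tilde{\mathbb{P}}$ on $(\Omega,\mathcal{F}_T)$ via $d\tilde{\mathbb{P}}/d\mathbb{P} = Z_T$. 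By Girsanov's theorem, $\tilde{B}^i_t := B^i_t - \tfrac{1}{\sqrt{2\sigma}}\int_0^t b^i_s\,ds$ are independent $(\mathcal{F}_t)$-Brownian motions under $\tilde{\mathbb{P}}$, and substituting back shows
\[
Y^i_t = X^i_0 + \int_0^t V[\nu^N_s](Y^i_s)\,ds + \sqrt{2\sigma}\,\tilde{B}^i_t - K^i_t,
\]
which is precisely \eqref{sys_sde_R}. The pathwise reflection structure of $K^i$ is untouched by the equivalent change of measure, so $(\Omega,\mathcal{F},(\mathcal{F}_t)_t,\tilde{\mathbb{P}})$ together with $(Y^i,\tilde{B}^i,K^i)_{i=1}^N$ furnishes the desired weak solution on $[0,T]$.

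The only genuine subtlety is a non-obstacle: the discontinuity of $V$ in the spatial variables prevents pathwise uniqueness, but it is harmless for weak existence here because the Girsanov exponential only sees the drift through $L^2([0,T])$-norms of bounded processes. An alternative approach would mollify $\mathbf{1}_{K(w(\cdot))}$ to obtain a Lipschitz drift, solve by the Tanaka theory, and pass to the limit in the martingale formulation using tightness and the smallness of $|\Theta^{\varepsilon,+}|$ from ${\bf (H2)}(ii)$, but the Girsanov route bypasses any compactness argument altogether.
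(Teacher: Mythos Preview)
Your proposal is correct and follows essentially the same route as the paper: construct the reflected Brownian motions via Tanaka's solution of the Skorokhod problem, note that the discontinuous drift $V[\mu^N]$ is uniformly bounded by $\|\nabla\varphi\|_{L^\infty}$, verify Novikov's condition trivially, and apply Girsanov to obtain a weak solution under the new measure. The paper's proof is line-for-line the same argument, only with the roles of the symbols $\mathcal{X}$ and $\mathcal{Y}$ swapped relative to yours.
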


We next state the theorem on the existence of solutions to the nonlinear SIEs and its associated PDEs.
\begin{theorem}\label{thm_SDE} Let $\rho_0$ be a probability measure on $\mo \subset \R^d$ satisfying $\rho_0 \in  L^\infty(\mo)$ and let $(X_0^i)_{i=1,\cdots, N}$ be $N$ independent variables with the law $\rho_0$. Suppose that the set-valued functions $K$ satisfies $\bf{(H1)}$-$\bf{(H2)}$, $w$ is Lipschitz, and $\nabla\varphi \in W^{1,\infty}(\R^d)$. Then, for some $T > 0$, there exist a unique pathwise solution $(Y_t^i)_{i=1,\cdots, N}$ to the nonlinear SIEs \eqref{sys_NLS_R} and a unique weak solution $\rho \in L^\infty(0,T; (L^1_+ \cap L^\infty)(\mo)) \cap \mc([0,T];\pp_1(\mo))$ to \eqref{1_pde_R} for the initial condition $\rho_0$, which is the law of solution for \eqref{sys_NLS_R} up to time $T >0$. Moreover if $\tilde{\rho}$ is another weak solution to \eqref{1_pde_R} up to time $T>0$ with initial condition $\tilde{\rho}_0 \in\pp(\mo)$ then for any $t\in [0,T]$ we have
\[
\mathcal{W}_\infty(\rho_t,\tilde{\rho}_t)\leq e^{\int_0^t \|\rho_s\|_{L^1\cap L^{\infty}}ds}\,\mathcal{W}_\infty(\rho_0,\tilde{\rho}_0).
\]
\end{theorem}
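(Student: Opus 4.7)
The plan is to build the unique pathwise solution of \eqref{sys_NLS_R} and its law $\rho$ by a fixed point argument for the McKean map in the space of density flows, using a linear reflected SDE as the elementary brick. The $\mathcal{W}_\infty$ stability bound stated in the theorem will be the engine: once established, it simultaneously yields uniqueness of the fixed point and the final inequality.

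\textbf{Step 1 (linear reflected SDE and $L^\infty$ propagation).} For a fixed measure flow $(\mu_s) \in \mc([0,T];\pp(\overline\mo))$ the drift $V[\mu_s](x)$ is bounded by $\|\nabla\varphi\|_{L^\infty}$, so the Skorokhod problem on the convex Lipschitz domain $\mo$ with this drift admits (at least) a weak solution exactly as in Theorem~\ref{thm_SIE}; call its time-$t$ marginal $\Phi(\mu)_t$. An It\^o/duality computation shows $\Phi(\mu)_t$ is a weak solution of $\pa_t\rho_t+\nabla\cdot(\rho_t V[\mu_t])=\sigma\Delta\rho_t$ with the no-flux boundary condition, and a standard parabolic argument with bounded drift gives $\|\Phi(\mu)_t\|_{L^\infty}\le \|\rho_0\|_{L^\infty}e^{Ct}$. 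Hence the convex set $\mathcal{A}_T=\{\mu\in \mc([0,T];(\pp_1(\overline\mo),\mathcal{W}_\infty)):\|\mu_t\|_{L^\infty}\le M_T\}$ is $\Phi$-invariant for suitable $M_T$.

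\textbf{Step 2 (stability via synchronous coupling).} Given $\mu,\tilde\mu\in\mathcal{A}_T$, let $Y,\tilde Y$ be the corresponding reflected processes driven by the same Brownian motion with initial couple $(Y_0,\tilde Y_0)$ realizing $\mathcal{W}_\infty(\mu_0,\tilde\mu_0)$. The convexity of $\mo$ gives $\lal \tilde Y_t-Y_t,n(Y_t)\ral\le 0$ on $\{Y_t\in\pa\mo\}$ and symmetrically for $\tilde Y_t$, so the reflection increments contribute non-positively to $d|Y_t-\tilde Y_t|^2$, yielding the pathwise bound
\[
|Y_t-\tilde Y_t|\le |Y_0-\tilde Y_0|+\int_0^t\bigl|V[\mu_s](Y_s)-V[\tilde\mu_s](\tilde Y_s)\bigr|\,ds.
\]
I would split the integrand as $V[\mu_s](Y_s)-V[\mu_s](\tilde Y_s)$ plus $V[\mu_s](\tilde Y_s)-V[\tilde\mu_s](\tilde Y_s)$. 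For the first piece, Lipschitzness of $\nabla\varphi$ together with Remark~\ref{rmk_sym}, and the bound $\mu_s(A)\le\|\mu_s\|_{L^\infty}|A|$, gives $C(1+\|\mu_s\|_{L^\infty})|Y_s-\tilde Y_s|$. For the second piece, introduce an optimal $\mathcal{W}_\infty$-coupling $\pi$ of $(\mu_s,\tilde\mu_s)$: the Lipschitz contribution of $\nabla\varphi$ yields $C\,\mathcal{W}_\infty(\mu_s,\tilde\mu_s)$, while the indicator difference is dominated via Lemma~\ref{lem_est2} (with $x_1=x_2=\tilde Y_s$) by $\mb_{\pa^{2|y-\tilde y|}K(w(\tilde Y_s))}(y-\tilde Y_s)$. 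The crucial point is that $|y-\tilde y|\le\mathcal{W}_\infty(\mu_s,\tilde\mu_s)$ holds $\pi$-almost surely, so the thickness of the boundary layer becomes deterministic and ${\bf (H2)}(ii)$ gives $\le C\|\mu_s\|_{L^\infty}\mathcal{W}_\infty(\mu_s,\tilde\mu_s)$. Taking $\IP$-esssup and using $\mathcal{W}_\infty(\Phi(\mu)_t,\Phi(\tilde\mu)_t)\le\IP\mbox{-}\esssup|Y_t-\tilde Y_t|$, Gronwall delivers the announced exponential estimate.

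\textbf{Step 3 (fixed point, uniqueness, and main obstacle).} For $T$ small enough the multiplier $CT(1+M_T)e^{CT(1+M_T)}$ is $<1$, so Step~2 makes $\Phi$ a strict contraction on $\mathcal{A}_T$ for $\sup_t \mathcal{W}_\infty$, producing a unique fixed point $\rho$; the a priori $L^\infty$ bound of Step~1 then allows iteration up to any horizon $T$. Pathwise uniqueness of \eqref{sys_NLS_R} for the fixed $\rho$ follows from the same coupling argument applied to two solutions sharing the Brownian motion. Finally, the displayed stability inequality is Step~2 applied with $\mu=\rho$, $\tilde\mu=\tilde\rho$ both weak solutions (and $\|\rho_s\|_{L^1\cap L^\infty}=1+\|\rho_s\|_{L^\infty}$ absorbing the constants). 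The main obstacle is exactly the indicator-thickness bound: any $\mathcal{W}_p$ with $p<\infty$ would leave $|y-\tilde y|$ stochastic inside $\mb_{\pa^{2|y-\tilde y|}K}$, preventing the use of ${\bf (H2)}(ii)$ and breaking closability of Gronwall---this is the obstruction pointed out in \cite[Remark~3.1]{CCHS} and is precisely why the infinite Wasserstein distance is indispensable. A secondary technicality is propagating $\|\rho_t\|_{L^\infty}$ through the linear Fokker--Planck equation with merely bounded measurable drift near $\pa\mo$.
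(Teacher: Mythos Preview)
Your fixed-point scheme is a genuinely different route from the paper's. The paper never sets up a McKean map: it mollifies the indicator to $\mb^{\e,\eta}_K$, invokes Sznitman's result to solve the smooth nonlinear reflected problem \eqref{ApproNLSDER}, obtains the uniform $L^\infty$ bound on $\rho^{\e,\eta}$ via a divergence estimate (Proposition~\ref{lem:EstLinf}), and then shows the family $(Y^{\e,\eta})$ is Cauchy in $\IP\text{-}\esssup$ as $\e,\eta\to0$ by exactly the $\mathcal{W}_\infty$ weak--strong argument of Lemma~\ref{lem:roparg} (Proposition~\ref{prop:exNLSDER}). Both routes pivot on the same mechanism---your observation that the boundary-layer thickness $2|y-\tilde y|$ becomes deterministic under a $\mathcal{W}_\infty$-optimal coupling is precisely the content of Lemma~\ref{lem:roparg}. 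Your approach is conceptually cleaner (no regularization layer) but must face the linear reflected SDE with the raw drift $V[\mu]$; the paper sidesteps this by working with smooth drifts until the very last limit.

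There are, however, two genuine gaps. First, in Step~1 you only claim a weak solution via Girsanov, yet Step~2 synchronously couples $Y,\tilde Y$ on the \emph{same} Brownian motion, which requires strong solutions. The repair is already latent in your first-piece estimate: for $\mu\in L^\infty$ the map $x\mapsto V[\mu](x)$ is honestly Lipschitz with constant $C\|\mu\|_{L^1\cap L^\infty}$ (combine Remark~\ref{rmk_sym}, Lemma~\ref{lem_est2} and ${\bf (H2)}$(ii)), so Tanaka/Lions--Sznitman give strong well-posedness on $\mathcal{A}_T$ and the coupling is legitimate. This Lipschitz bound is also what furnishes $\|\nabla\cdot V[\mu]\|_{L^\infty}\le C\|\mu\|_{L^\infty}$, which---contrary to your Step~1---is what drives the $L^\infty$ propagation; mere boundedness of the drift does not suffice (and the iteration to arbitrary $T$ is then unwarranted, matching the ``for some $T>0$'' in the statement). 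Second, the displayed stability must hold for any weak solution $\tilde\rho$ with $\tilde\rho_0\in\pp(\mo)$, with no $L^\infty$ assumption. Your Step~2 bounds $\mathcal{W}_\infty(\Phi(\rho)_t,\Phi(\tilde\rho)_t)$ and then silently uses $\tilde\rho=\Phi(\tilde\rho)$; but when $\tilde\rho\notin L^\infty$ the drift $V[\tilde\rho]$ is only bounded, uniqueness for the linear Fokker--Planck is not free, and identifying $\tilde\rho$ with the SDE marginals requires a separate ``PDE $\Rightarrow$ SDE'' representation. The paper supplies exactly this in Appendix~\ref{app_b} (Lemma~\ref{lem:app_b}) via a Figalli-type regularization/tightness/Skorokhod argument, and then couples on the probability space produced there; without such a step your final inequality is not established.
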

\begin{remark}\label{rmk:mom} Since $\mo$ is bounded and the mass is conserved, we easily get
\[
\int_\mo |x|^q \rho_t\,dx \leq (\mbox{diam}(\mo))^q \int_\mo \rho_t\,dx = (\mbox{diam}(\mo))^q \int_\mo \rho_0\,dx \quad \mbox{for} \quad q \geq 1, \quad t \geq 0,
\]
i.e., $\rho \in L^\infty(0,T;\pp_q(\mo))$ for any $q \geq 1$.
\end{remark}

\begin{remark}It is worth emphasizing that our strategy is directly applicable to the whole space case when $\sigma = 0$ under additional assumptions on the initial moment bounds, i.e., there is no diffusion, this is why we specify the regularity of solutions even though there is the inclusion between $L^p$ spaces. Note that it is impossible to define the infinite Wasserstein distance between two solutions to \eqref{1_pde_R} in the whole space. 
\end{remark}

Before stating our result on propagation of chaos, we recall the definition of a chaotic sequence. We refer to \cite{sznitman} for details of the proof of the equivalence relation in the definition below.
\begin{definition} Let $\rho$ be a probability on some polish space $E$ and $\lt((X_i^N)_{i \le N} \rt)_{N \in \mathbb{N}}$  be a sequence of exchangeable random variables. Let us denote by $(u^N)_{N\in \mathbb{N}}$ the sequence of their laws. Then  $\lt((X_i^N)_{i \le N} \rt)_{N \in \mathbb{N}}$  is $\rho$-chaotic if for any $k\geq 2$ and test functions $\phi_1,\cdots,\phi_k\in \mc^{\infty}(E)$ it holds
\[
\int_{E^N}\phi_1\otimes\cdots\otimes\phi_k\otimes 1\cdots\otimes 1 \, u^N(dx_1,\cdots,dx_N)\to \prod_{i=1}^k\int_{E} \phi_i(x)\, \rho(dx) \quad \mbox{as} \quad N \to \infty.
\]
Or equivalently if
\[
\mu^N := \frac1N \sum_{i=1}^N \delta_{X^N_i} \overset{\LL}{\longrightarrow} \rho \quad \mbox{as} \quad N \to \infty,
\]
where the convergence is in a law, in the space of probability measure on $E$.
\end{definition}
\begin{remark}
Assume that $\rho$ lies in $\mathcal{P}_p(E)$ endowed with the $\mathcal{W}_p$ distance. Then a sufficient condition for the sequence $\lt((X_i^N)_{i \le N} \rt)_{N \in \mathbb{N}}$ to be $\rho$-chaotic is the following:
		$$\E \lt[ \mathcal{W}_p(\mu^N, \rho)\rt] \to 0 \quad \mbox{as} \quad N \to \infty.$$
	\end{remark}

Then starting from an $\rho_0$-chaotic initial condition on the dynamics of  \eqref{sys_sde_R} (in fact, we assume that the random variables $(X^i_0)_{i=1,\cdots,N}$ are independent identically distributed with the law $\rho_0$ which is stronger than being $\rho_0$-chaotic), we show that this chaotic character is preserved on time, more precisely, that solutions at time $t>0$ to  \eqref{sys_sde_R} are $\rho_t$-chaotic where $\rho_t$ is solution at time $t$ to \eqref{1_pde_R}. Moreover, we provide some quantitative estimate in the theorem below.

\begin{theorem}\label{thm:PC_T}  Suppose that the set-valued function $K$ satisfies $\bf{(H1)}$-$\bf{(H2)}$, $w$ is Lipschitz, and $\nabla \varphi \in W^{1,\infty}(\R^d)$. Let $\rho$ be a solution to the equation \eqref{1_pde_R} up to time $T > 0$, such that $\rho \in L^\infty(0,T; (L^1_+ \cap L^\infty)(\mo)) \cap \mc([0,T];\pp_1(\mo))$ with initial data $\rho_0 \in (L^1_+ \cap L^\infty)(\mo) \cap  \pp_1 (\mo)$. Furthermore, assume that $(X_0^i)_{i=1,\cdots, N}$ are $N$ independent variables with the law $\rho_0$.  Then, for any $1 \leq p < q < \infty$ and integer $m$ such that $N\geq (2m)^2$, there exists a constant $C>0$ depending only on $\rho_0$, $q$, $\|w\|_{Lip}$, $\|\nabla_x\varphi\|_{W^{1,\infty}}$, and $T$ such that 
\[
\sup_{t\in[0,T]}\mathbb{E}\lt[\mathcal{W}_p(\mu_t^N,\rho_t)\rt] \leq Cc_mN^{-\frac{1}{2}+\frm}+ \left\{ \begin{array}{ll}
 N^{-1/2p} + N^{-(q-p)/qp} & \textrm{if $2p > d$ and $q \neq 2p$}, \\[2mm]
 N^{-1/2p}\log(1+N)^{1/p} + N^{-(q-p)/qp} & \textrm{if $2p = d$ and $q \neq 2p$},\\[2mm]
 N^{-1/d} + N^{-(q-p)/qp} & \displaystyle \textrm{if $2p < d$ and } q \neq \frac{d}{d-p},
  \end{array} \right.
\]
with $c_m=( 2m!)^\frm\sqrt{8m}+8e^{2m}$ and where $\mu_t^N = \frac1N\sum_{i=1}^N \delta_{X^i_t}$ is the empirical measure associated to the particle system \eqref{sys_sde_R} with initial condition $(X_0^i)_{i=1,\cdots, N}$.
\end{theorem}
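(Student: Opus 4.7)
I would couple \eqref{sys_sde_R} to $N$ i.i.d.\ copies $(Y^i)_{i=1,\dots,N}$ of the nonlinear SIE \eqref{sys_NLS_R} driven by the same Brownian motions and initial data as in \eqref{sys_sde_R}, so that by Theorem \ref{thm_SDE} each $Y^i_t$ has law $\rho_t$. Writing $\bar\mu^N_t := \frac{1}{N}\sum_{i=1}^N \delta_{Y^i_t}$, the triangle inequality gives
\[
\E[\mathcal{W}_p(\mu^N_t,\rho_t)] \le \E[\mathcal{W}_p(\mu^N_t,\bar\mu^N_t)] + \E[\mathcal{W}_p(\bar\mu^N_t,\rho_t)],
\]
and the second summand, being a Glivenko--Cantelli quantity for i.i.d.\ samples of $\rho_t$, is controlled uniformly in $t$ by the Fournier--Guillin quantitative rates together with the moment bound of Remark \ref{rmk:mom}; this produces exactly the three dimension-dependent terms in the statement. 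The remaining task is therefore to prove $\E[\mathcal{W}_p(\mu^N_t,\bar\mu^N_t)] \lesssim c_m N^{-1/2+1/2m}$.

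\textbf{Ito with convex reflection.} Applying Ito's formula to $|X^i_t-Y^i_t|^{2m}$, the stochastic integrals cancel because the two systems share the same Brownian motion, and the reflection terms enter as
\[
-2m\int_0^t |X^i_s-Y^i_s|^{2m-2}\bigl\langle X^i_s-Y^i_s, n(X^i_s)\bigr\rangle\,d|K^i|_s + (\text{symmetric term in }Y^i,\widetilde K^i),
\]
both non-positive by convexity of $\mo$ via $\langle n(x), y-x\rangle\le 0$ for $x\in\pa\mo,\ y\in\overline{\mo}$. So the reflection is discarded for free and only the drift survives. I then split
\[
V[\mu^N_s](X^i_s)-V[\rho_s](Y^i_s) = \bigl(V[\mu^N_s](X^i_s)-V[\bar\mu^N_s](Y^i_s)\bigr) + \bigl(V[\bar\mu^N_s](Y^i_s)-V[\rho_s](Y^i_s)\bigr),
\]
the first being a coupled-particle discrepancy and the second a centred empirical mean over $(Y^j)_{j\ne i}$.

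\textbf{Handling the two pieces.} For the mean-field error, boundedness of the kernel and independence yield, via Marcinkiewicz--Zygmund moment inequalities for sums of independent bounded mean-zero vectors, $\E|V[\bar\mu^N_s](Y^i_s)-V[\rho_s](Y^i_s)|^{2m} \le c_m^{2m}N^{-m}$; the combinatorial factor $(2m!)^{1/2m}\sqrt{8m}$ and the exponential contribution $8e^{2m}$ are precisely what one obtains by optimising such a moment estimate, explaining the form of $c_m$. For the coupled-particle discrepancy the smooth factors are handled by the Lipschitz bounds on $\nabla\varphi$ and $w$, while the discontinuous indicator difference is controlled by Lemma \ref{lem_est2} together with $\bf{(H1)}$--$\bf{(H2)}$ and Remark \ref{rmk_sym}: it is dominated by $\mathbf{1}_{\Theta(w(Y^i))^{C|X^i-Y^i|,+}}(Y^j-Y^i)+\mathbf{1}_{\Theta(w(Y^i))^{C|X^j-Y^j|,+}}(Y^j-Y^i)$. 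Averaging over $j$ and invoking the rope argument (the bound $|\Theta(\cdot)^{\e,+}|\le C\e$ coupled with $\rho\in L^\infty$ and an $N^{-1/2}$ law-of-large-numbers fluctuation control on empirical indicator averages) turns these into a bound of order $\|\rho\|_{L^\infty}\max_k|X^k-Y^k|$ after enlarging the slab width to $C\max_k|X^k-Y^k|$, uniformly in the particle index $i$.

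\textbf{Closure and main obstacle.} Young's inequality and taking the maximum over $i$ yield a Gronwall-type integral inequality for $\E[\max_i|X^i_t-Y^i_t|^{2m}]$, closed using the inequalities of Appendix \ref{app_gron}, and giving $\E[\max_i|X^i_t-Y^i_t|^{2m}] \le C c_m^{2m} N^{-m+1}$ on $[0,T]$ (the extra factor $N$ compared with the per-particle bound coming from $\E[\max_i|\cdot|^{2m}]\le\sum_i\E[|\cdot|^{2m}]$, which is why the constraint $N\ge(2m)^2$ is harmless). Since $\mathcal{W}_p(\mu^N_t,\bar\mu^N_t)\le\max_i|X^i_t-Y^i_t|$ under the canonical coupling, Jensen gives
\[
\E[\mathcal{W}_p(\mu^N_t,\bar\mu^N_t)] \le \E[\max_i|X^i_t-Y^i_t|^{2m}]^{1/2m} \le C c_m N^{-1/2+1/2m},
\]
which combined with the Fournier--Guillin term yields the theorem. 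The main obstacle is the self-referential character of the discontinuity: the slab produced by Lemma \ref{lem_est2} has width proportional to the very coupling distance one is trying to estimate, so the rope argument must convert it into a genuinely linear error in $\max_k|X^k-Y^k|$ with a constant depending only on $\|\rho\|_{L^\infty}$; this is what forces a uniform-in-$i$ (maximum) Gronwall loop instead of a pointwise one, and the $\max$ over $N$ particles is the source of the extra $N^{1/(2m)}$ factor compared with the ideal $N^{-1/2}$ rate.
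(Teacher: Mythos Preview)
Your overall strategy matches the paper's: couple $(X^i)$ to i.i.d.\ copies $(Y^i)$ of the nonlinear process with the same noise and initial data, split via the triangle inequality, quote Fournier--Guillin for $\E[\mathcal{W}_p(\bar\mu^N_t,\rho_t)]$, and close a Gronwall loop for $\sup_i|X^i-Y^i|$ using convex reflection and the rope-argument decomposition of the drift error. Two differences are worth recording.

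First, the paper takes a cleaner path than yours: it applies It\^o only to $|X^i-Y^i|^2$ (not the $2m$-th power), uses Lemma~\ref{lem:coup_T} to obtain the \emph{pathwise} inequality $\sup_i|X^i_t-Y^i_t|\le C\int_0^t\bigl(\|\rho_s\|_{L^1\cap L^\infty}\sup_i|X^i_s-Y^i_s|+H_N\bigr)ds$, applies the deterministic Gronwall of Appendix~\ref{app_gron} still pathwise, and only afterward takes expectation and bounds $\E[H_N]\le\E[H_N^{2m}]^{1/2m}$. This avoids both Young's inequality and the expectation-before-Gronwall maneuver; your route works too but is slightly heavier.

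Second, you gloss over a genuine technical point. Once the indicator difference is dominated by $\mathbf{1}_{\Theta(w(Y^i))^{CA_N,+}}(Y^j-Y^i)$ with the \emph{random} slab width $A_N=\sup_k|X^k-Y^k|$, the empirical average over $j$ must be compared to $\rho$ \emph{uniformly over all widths}: one needs
\[
\E\Bigl[\sup_{i}\,\sup_{u\ge0}\Bigl|\int_{\mo}\mathbf{1}_{\Theta(w(Y^i))^{u,+}}(y-Y^i)(\rho^N-\rho)(dy)\Bigr|^{2m}\Bigr]^{1/2m}\lesssim N^{-1/2+1/2m}.
\]
A bare Marcinkiewicz--Zygmund or LLN bound does not deliver the supremum in $u$. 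The paper handles this in Lemma~\ref{lem_(De)-Poiss} by de-Poissonizing and observing that, conditionally on $Y^i$, the process $u\mapsto\int\mathbf{1}_{\Theta(w(Y^i))^{u,+}}(y-Y^i)\,\bar\varrho^N(dy)$ is a martingale in $u$, so Doob's maximal inequality applies; this is what produces the $8e^{2m}$ portion of $c_m$. The $(2m!)^{1/2m}\sqrt{8m}$ piece comes from the separate combinatorial moment bound of Lemma~\ref{lem:LLN} for $V[\rho^N](Y^i)-V[\rho](Y^i)$, as you correctly indicated.
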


%
%
\subsection{Examples of sensitivity sets}\label{ssec:ex} In this part, we list several sensitivity sets satisfying our main assumptions ${\bf (H1)}$-${\bf (H2)}$. It is worth mentioning that, in the majority of cases, we do not need to introduce the generalized boundary set $\Theta$.

{\bf A fixed closed ball.-} If we choose $K = B(0,r) := \{ x \in \R^d : |x| \leq r\}$ with $r > 0$, then it is clear that $B(0,r)$ satisfies the assumptions ${\bf (H1)}$-${\bf (H2)}$ with $\Theta = \pa B(0,r)$. 

{\bf A closed ball with varying radius.-} Let $\bar r: \R_+ \to \R_+$ be bounded and Lipschitz function, and take into account the case $K(x) = B(0,\bar r(|x|))$. In this case, it is easy to check the conditions ${\bf (H1)}$, and ${\bf (H2)}$ with $\Theta(x) = \pa B(0,\bar r(|x|))$ since the symmetric difference is always included in a form of torus which can be expressed by the enlargement of $B(0,\bar r(|x|))$. 

{\bf A vision cone with a fixed angle.-} Let us consider the vision cone with a fixed angle $\theta\in(0,\pi)$ and a radius $r>0$, and a direction $w(x)$, so that the set valued function $K$ is defined as
\[
K(x) = C(r,w(x),\theta):= \lt\{ y\in \R^d : |y|\leq r \, , \cos^{-1}\lt(\frac{\lal y,w(x)\ral}{|y||w(x)|}\rt)\in[-\theta,\theta]  \rt\} \quad \mbox{with} \quad d=2,3.
\]
Suppose that the direction function $w$ is Lipschitz and bounded from the both above and below by some positive constant, i.e., $w^* \geq |w| \geq w_* > 0$. Then it is clear that  $K\circ w$ satisfies ${\bf (H1)}$, and it is not hard to check that satisfies ${\bf (H2)}$ with $\Theta = \pa K$ due to the boundedenss of $w$. For this, similar estimates in \cite{CCHS} can be used. Note that such cutoff interaction function is considered in \cite{CPT} for the dynamics of pedestrians.

{\bf A vision cone with varying angles.-} We now consider the vision cone with varying angles with respect to the speed. For this, we first define the angle function $0 < \theta(z) \in \mc^\infty(\R_+)$ by $\theta(z) = \pi$ for $0 \leq z \leq 1$, $\theta(z)$ is decreasing for $z \geq 1$, and $\theta(z) \to \theta_* > 0$ as $z \to +\infty$. Using this $\theta$ function, we set
\[
K(x) = C(r,w(x),\theta(|w(x)|)):= \lt\{ y\in \R^d : |y|\leq r \, , \cos^{-1}\lt(\frac{\lal y, w(x)\ral}{|y||w(x)|}\rt)\in\lt[-\theta(|w(x)|),\theta(|w(x)|)\rt]  \rt\}, 
\]
with $d=2,3$. Very similar consideration is studied in \cite{CCHS} for second-order collective behavior models. In this case, it is required to use the following generalized boundary set $\Theta$:
\[
\Theta(x):=\begin{cases}
\partial C(r,w(x),\theta(|w(x)|))\cup R(w(x))& \text{ if } |w(x)|\in(1/2,1), \\ 
\partial C(r,w(x),\theta(|w(x)|)) & \text{ else}, 
\end{cases} 
\]
where $R(w(x))=[a(w(x)),b(w(x))]$ with
\[
a(w(x))=-r\frac{w(x)}{|w(x)|} \quad , \quad b(w(x))= 2r(|w(x)|-1)\frac{w(x)}{|w(x)|}.
\]
Then, by assuming the Lipschitz continuity for the direction function $w$ and using similar arguments as in \cite[Section 5.3]{CCHS}, we can check that the above vision cone satisfies the assumptions ${\bf (H1)}$-${\bf (H2)}$.

%
%
\subsection{Weak-strong Lipschitz estimate}

In order to give a main idea of the proof, we provide a crucial weak-strong Lipschitz estimate for the velocity fields generated by two probability measures under the assumptions ${\bf (H1)}$-${\bf (H2)}$ on the set valued function $K(\cdot)$.
\begin{lemma}\label{lem:roparg}
Let $Y$ and $Y'$ be two random variables on $\mo$ and denote $\rho=\LL(Y)$ and $\rho'=\LL(Y')$. Assume that $\rho\in (L^1 \cap L^{\infty})(\mo)$. Then there exists a constant depending only on $\|\nabla \varphi\|_{W^{1,\infty}},\|w\|_{Lip}$ such that
\bq\label{eq_vec}
|V[\rho](Y)-V[\rho'](Y')|\leq C\|\rho\|_{L^1 \cap L^\infty}\IP\mbox{-}\esssup|Y-Y'|,
\eq
where $V$ is given in \eqref{def_V}.
\end{lemma}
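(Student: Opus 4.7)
The plan is to express both velocities using a common coupling of $\rho$ and $\rho'$, then split the integrand into a \emph{Lipschitz} part (handled by $\nabla\varphi\in W^{1,\infty}$) and a \emph{jump} part (handled by Lemma \ref{lem_est2} together with the generalized-boundary assumption ${\bf (H2)}$). Set $\delta:=\IP\mbox{-}\esssup|Y-Y'|$, and let $\pi$ be a $\mathcal{W}_\infty$-optimal coupling of $(\rho,\rho')$. Since $(Y,Y')$ is itself a coupling, $\pi$-a.s.\ $|z-z'|\leq\mathcal{W}_\infty(\rho,\rho')\leq\delta$. Fixing a realization of $(Y,Y')$ with $|Y-Y'|\leq\delta$, I would write
\[
V[\rho](Y)-V[\rho'](Y')=\int\bigl(T_1+T_2\bigr)\pi(dz,dz'),
\]
where $T_1:=(\nabla\varphi(Y-z)-\nabla\varphi(Y'-z'))\mb_{K(w(Y))}(z-Y)$ absorbs the smooth part of the difference, while $T_2:=\nabla\varphi(Y'-z')\bigl(\mb_{K(w(Y))}(z-Y)-\mb_{K(w(Y'))}(z'-Y')\bigr)$ collects the indicator mismatch.

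The $T_1$ piece is routine: Lipschitzness of $\nabla\varphi$ and $|z-z'|,|Y-Y'|\leq\delta$ give a pointwise bound by $C\delta\,\mb_{K(w(Y))}(z-Y)$, and integration against $\pi$ (projected to the marginal $\rho(dz)$) contributes at most $C\delta\|\rho\|_{L^1}$. For $T_2$ I would further split the indicator difference as
\[
\bigl[\mb_{K(w(Y))}(z-Y)-\mb_{K(w(Y'))}(z-Y)\bigr]+\bigl[\mb_{K(w(Y'))}(z-Y)-\mb_{K(w(Y'))}(z'-Y')\bigr],
\]
chosen so that each piece still depends on $z$ through $z-Y$ only (for the first) or through the standard rope decomposition (for the second), which means both integrate against the $L^\infty$ marginal $\rho(dz)$. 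For the first bracket, ${\bf (H2)}$(iii) together with the Lipschitzness of $w$ yields the bound $\mb_{\Theta(w(Y))^{C|Y-Y'|,+}}(z-Y)$, whose $\rho$-mass is $\leq C\|w\|_{Lip}\|\rho\|_{L^\infty}|Y-Y'|$ by ${\bf (H2)}$(ii). For the second bracket, Lemma \ref{lem_est2} applied with the \emph{fixed} set $K(w(Y'))$ produces $\mb_{\pa^{2|Y-Y'|}K(w(Y'))}(z-Y)+\mb_{\pa^{2|z-z'|}K(w(Y'))}(z-Y)$; since $\Theta$ is closed the inclusion $\pa^\e K(w(Y'))\subseteq\Theta(w(Y'))^{\e,+}$ follows from ${\bf (H2)}$(i), and then ${\bf (H2)}$(ii) combined with $|z-z'|\leq\delta$ gives a $\rho$-mass $\leq C\|\rho\|_{L^\infty}\delta$. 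Summing these bounds produces the claimed estimate.

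The principal obstacle is the double position-dependence of the interaction kernel: the set $K(w(\cdot))$ depends on the base point, while the argument $y-x$ depends on both positions. A single direct use of Lemma \ref{lem_est2} cannot isolate the correct thin set, so one must first strip the base-point dependence via ${\bf (H2)}$(iii) before applying the rope argument to a single frozen set $K(w(Y'))$. Executing the splitting in that order, and being careful to integrate the thin-set indicators against the $L^\infty$ density $\rho$ (the $z$-marginal of $\pi$) rather than $\rho'$, is the delicate technical point.
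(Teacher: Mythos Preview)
Your proof is correct and follows essentially the same three-term decomposition as the paper (your $T_1$ and the two brackets of $T_2$ are exactly the paper's $I_1,I_2,I_3$). The only difference is that the paper takes $\pi:=\LL(Y,Y')$ directly rather than a $\mathcal{W}_\infty$-optimal coupling, which already satisfies $\pi\text{-}\esssup|y-y'|=\IP\text{-}\esssup|Y-Y'|$ and spares you the existence question for optimal $\mathcal{W}_\infty$ couplings.
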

\begin{proof}
Introducing $\pi:=\LL(Y,Y')$, we first decompose the left hand side of \eqref{eq_vec} into three terms:
$$\begin{aligned}
&|V[\rho](Y)-V[\rho'](Y')|\cr
&\quad =\lt|\int\lt( \mb_{K(w(Y))}(y-Y)\nabla\varphi(Y-y)-\mb_{K(w(Y'))}(y'-Y')\nabla\varphi(Y'-y')\rt)\pi(dy,dy')\rt|\cr
&\quad \leq \int\lt( \mb_{K(w(Y))}(y-Y)\nabla\varphi(Y-y)-\mb_{K(w(Y))}(y-Y)\nabla\varphi(Y'-y')\rt)\pi(dy,dy')\cr
&\qquad +\int\lt( \mb_{K(w(Y))}(y-Y)-\mb_{K(w(Y'))}(y-Y)\rt)\nabla\varphi(Y'-y')\pi(dy,dy')\cr
&\qquad +\int\lt( \mb_{K(w(Y'))}(y-Y)-\mb_{K(w(Y'))}(y'-Y')\rt)\nabla\varphi(Y'-y')\pi(dy,dy')\cr
&\quad =: I_1+I_2+I_3. 
\end{aligned}$$
Here $I_i,i=1,2,3$ can be estimated as follows.

\noindent $\diamond$ Estimate $I_1$: Due to the regularity of $\nabla\varphi$, we easily obtain
$$
I_1\leq \|\nabla\varphi\|_{Lip}\lt(\IP\mbox{-}\esssup|Y-Y'|+\pi\mbox{-}\esssup |y-y'|\rt)=2\|\nabla\varphi\|_{Lip}\IP\mbox{-}\esssup|Y-Y'|. 
$$
$\diamond$ Estimate $I_2$: Using our main assumptions together with Remark \ref{rmk_sym} yields
$$\begin{aligned}
		I_2&\leq \|\nabla\varphi\|_{L^\infty}\int \lt|\mb_{K(w(Y))}(y-Y)-\mb_{K(w(Y'))}(y-Y)\rt|\pi(dy,dy')\cr
		&\leq \|\nabla\varphi\|_{L^\infty}\int \mb_{K(w(Y))\Delta K(w(Y'))}(y-Y)\rho(dy)\\
		&\leq C\|\nabla\varphi\|_{L^\infty} \|\rho\|_{L^1\cap L^\infty} |K(w(Y))\Delta K(w(Y'))|\\
		&\leq C\|\nabla\varphi\|_{L^\infty} \|\rho\|_{L^1\cap L^\infty}\|w\|_{Lip}|Y-Y'|\\
		&\leq C\|\nabla\varphi\|_{L^\infty}\|\rho\|_{L^1\cap L^\infty}\IP\mbox{-}\esssup|Y-Y'|.
\end{aligned}$$

$\diamond$ Estimate $I_3$: It follows from Lemma \ref{lem_est2}, and $\bf{(H2)}$ that
$$\begin{aligned}
I_3&\leq \|\nabla\varphi\|_{L^\infty}\int \lt|\mb_{K(w(Y')}(y-Y)-\mb_{K(w(Y'))}(y'-Y')\rt|\pi(dy,dy')\cr
&\leq \|\nabla\varphi\|_{L^\infty}\int \lt(\mb_{\partial^{2|Y-Y'|}K(w(Y'))}(y-Y)+\mb_{\partial^{2|y-y'|}K(w(Y'))}(y-Y)\rt)\pi(dy,dy')\\
&\leq \|\nabla\varphi\|_{L^\infty}\int \lt(\mb_{\partial^{2\IP\mbox{-}\esssup |Y-Y'|}K(w(Y'))}(y-Y)+\mb_{\partial^{2\pi\mbox{-}\esssup |y-y'|}K(w(Y'))}(y-Y)\rt)\rho(dy)\\
&\leq  C\|\nabla\varphi\|_{L^\infty} \|\rho\|_{L^1\cap L^\infty}\lt(\IP\mbox{-}\esssup |Y-Y'|+\pi\mbox{-}\esssup |y-y'|\rt)\\
&\leq C\|\nabla\varphi\|_{L^\infty} \|\rho\|_{L^1\cap L^\infty} \IP\mbox{-}\esssup |Y-Y'|.
\end{aligned}$$
By combining all the above estimates, we conclude our desired result.
\end{proof}
\begin{remark} As mentioned in Introduction, we are imposed to use the infinite Wasserstein distance to have the above weak-strong Lipschitz estimate due to the stronger singularity in the velocity fields than the one in \cite{CCHS}. Note that in \cite[Proposition 2.3]{CCHS} the similar estimate is obtained in the Wasserstein distance of order 1.
\end{remark}
%
%
%
%

\section{Global existence of weak solutions for the SIEs}\label{sec:ext_par}
In this section, we provide the details of proof of Theorem \ref{thm_SIE} on the global existence of weak solutions to the stochastic particle system \eqref{sys_sde_R}. The proof relies on an adapted use of Girsanov's Theorem which is useful for SDEs with non-smooth drift but additive noise. 

\subsection{Proof of Theorem \ref{thm_SIE}} Let $(\Omega,\mathcal{F},(\mathcal{F}_t)_{t\geq 0},\mathbb{P})$ be a stochastic basis and $(\mathcal{B}_t^N)_{t \geq 0}$ be a $dN$ dimensional $(\mathcal{F}_t)$-Brownian motion on this basis. We define a $\R^{dN}$-valued function $\mathcal{A}^N := (\mathcal{A}^N_1, \cdots, \mathcal{A}^N_N)$ by 
\[
\mathcal{A}_i^N(x_1,\cdots,x_N) := V[\mu^N](x_i),
\]
where $\mu^N:=\frac{1}{N}\sum_{j=1}^N\delta_{x_j}$. Next we define
\begin{align*}
\begin{aligned}
\mathcal{X}_t^N &:=\mathcal{X}^N_0+\sqrt{2\sigma}\mathcal{B}_t^N -\mathcal{K}^N_t, \quad \mathcal{K}^N_t:=(K_t^1,\cdots,K_t^N), \quad t > 0,\\
K_t^i &:=\int_0^t n (X_s^i)\,d|K^i|_s \ , \ |K^i|_t :=\int_0^t\mb_{\pa\mo}(X_s^i)\,d|K^i|_s, \quad \mbox{for } i = 1,\cdots,N,
\end{aligned}
\end{align*}
for which the well-posedness, together with the fact $\mathcal{X}_t^N\in \mo^N$ $\mathbb{P}$-a.s. for all $t\geq0$,  is ensured by \cite[Theorem 3.1]{Li-Si} or \cite{Tan}. Then we define
\bq\label{ext_sdeR2}
\mathcal{Y}^N_t :=\mathcal{B}^N_t-\frac{1}{\sqrt{2\sigma}}\int_0^t\mathcal{A}^N\left(\mathcal{X}_s^N \right)ds.
\eq
This implies that $(\mathcal{X}^N_t)_{t \geq 0}$ and $(\mathcal{Y}^N_t)_{t \geq 0}$ satisfy
$$\begin{aligned}				
&\mathcal{X}_t^N=\mathcal{X}^N_0+\int_0^t\mathcal{A}^N\left(\mathcal{X}_s^N \right)ds+\sqrt{2\sigma}\mathcal{Y}^{N}_t-\mathcal{K}^N_t,\\
& K_t^i=\int_0^t n(X_s^i)\,d|K^i|_s \ , \ |K^i|_t=\int_0^t\mb_{\pa\mo}(X_s^i)\,d|K^i|_s, \quad \mbox{for } i = 1,\cdots,N.
\end{aligned}$$
Note that the above stochastic integral equation has the same form with \eqref{sys_sde_R}. We now look for a proper stochastic basis under which $(\mathcal{Y}^N_t)_{t \geq 0}$ is a $dN$ dimensional Brownian motion. 
Fix $T>0$ and define $(Z_t^N)_{t \geq 0}$ by
\[
Z_t^N:=\exp\lt(\int_0^t \lt\lal\mathcal{A}^N\left( \mathcal{X}_s^N \right), dB_s^N \rt\ral-\frac{1}{2}\int_0^t |\mathcal{A}^N\left( \mathcal{X}_s^N \right)|^2\,ds\rt).
\]
Since
\[
\lt|\mathcal{A}^N\left( \mathcal{X}_s^N \right)\rt|\leq \|\nabla\varphi\|_{L^\infty} \quad \mbox{a.s.},
\]
we obtain 
\[
\mathbb{E}\lt[\exp\lt(\frac12\int_0^T \lt|\mathcal{A}^N\left( \mathcal{X}_s^N \right)\rt|^2 \,ds\rt)\rt]\leq e^{\|\nabla\varphi\|_{L^\infty}^2 T/2}.
\]
This together with the classical exponential martingale theory yields that the process $(Z_t^N)_{t \in [0,T]}$ is a positive martingale with respect to $(\mathcal{F}_t)_{t\in[0,T]}$ and $\E[Z_t^N] = 1$ for $t\in [0,T]$. Then, by Girsanov's Theorem(see for instance \cite[Theorem 2.51]{Par}), the stochastic process $\mathcal{Y}_t^N$ defined in \eqref{ext_sdeR2} is a $dN$ dimensional Brownian motion under the probability measure $\mathbb{Q}$ defined by
\[
\mathbb{Q}(A)=\int_A Z_t^Nd\mathbb{P}, \quad \mbox{for}\quad A\in \mathcal{F}_t.
\]
This concludes that the following couple 
	\[
\left ( \lt(\Omega,\mathcal{F}, (\mathcal{F}_t)_{t\in [0,T]},\mathbb{Q} \rt), (\lt(\mathcal{X}_t^N\rt)_{t\in [0,T]},\mathcal{Y}_t^N)_{t\geq 0},\mathcal{K}_t^N)_{t\in [0,T]}\right ),
	\]
	is a weak solution to \eqref{sys_sde_R}.

%
%
%
%
\section{Existence and stability of the nonlinear SIEs and PDEs}\label{sec:ext_sde}
In this section we study the existence and uniqueness of solutions to the nonlinear SDEs \eqref{sys_NLS_R} which process solutions have time marginals solutions to the continuity equation \eqref{1_pde_R}. As mentioned in Introduction, the existence of such process solutions are studied in \cite{Tan} where $\mo$ is an open convex, and later it is refined in \cite{Li-Si, Si} for the case where $\mo$ is an open domain satisfying the uniform exterior sphere condition which reads
\bq
\label{eq_extc1}
\exists r_0 > 0 \quad, \forall\,x \in \pa\mo, \quad \exists y_x \in \R^d \quad \mbox{such that} \quad B(y_x,r_0) \cap \overline\mo = \{x\}.
\eq
We here set ourselves in the case where $\mo$ is convex. Note that it implies that
\bq\label{eq_conv}
n(w)\cdot(w-w') \geq 0 \qquad \mbox{for any } w\in \pa\mo \mbox{ and } w'\in \overline\mo,
\eq
and if for some $w\in \pa \mo$ some vector $k\in \R^d$ satisfies
\[
k\cdot(w-w') \geq 0 \qquad \mbox{for any } w'\in \overline\mo,
\]
then it holds $k=\theta n(w)$ for some $\theta>0$. Later, we will use these observations for the existence of strong solutions to the system \eqref{sys_NLS_R}, see the proof of Proposition \ref{prop:exNLSDER} below.
\subsection{Regularized system}
In this part, we introduce a regularized system, and show the uniform boundedness of solutions to that regularized system in regularization parameters. Consider a mollified interaction function $\mb_{K}^{\e,\eta}$ defined by
\[
\mb^{\e,\eta}_{K(x')}(x) = \int_{\R^d} \mb_{K(x'-y')}(y-x)\phi_\e(y)\phi_\eta(y')\,dydy',
\]
and consistently with the notation introduced before we set
\[
V^{\e,\eta}[\mu](x):=\int_{\mo}\mb^{\e,\eta}_{K(w(x))}(y-x)\nabla\phi(x-y)\mu(dy).
\]
\begin{lemma} 
	Assume $\rho_0\in (L^1_+\cap L^{\infty})(\mo) \cap \pp_1(\mo)$, let  $Y_0$ be with the law $\rho_0$ and $(B_t)_{t\geq 0}$ be a Brownian motion independent of $Y_0$. Then for any $T>0$, there exists a unique process solving the following nonlinear SIEs up to time $T > 0$ in the strong sense:
\begin{equation}\label{ApproNLSDER}
\left\{ \begin{array}{ll}
\displaystyle Y_t^{\e,\eta}=Y_0+\int_0^t\left (\int \mb^{\e,\eta}_{K(w(Y^{\e,\eta}_s))}(y-Y_s^{\eta,\e})\nabla\varphi(Y_s^{\eta,\e}-y)\rho^{\e,\eta}_s(dy)\right)ds+\sqrt{2\sigma}B_t-K^{\e,\eta}_t, & \\[5mm]
\displaystyle K^{\e,\eta}_t=\int_0^t  n (Y_s^{\e,\eta})\,d|K^{\e,\eta}|_s, \quad |K^{\e,\eta}|_t=\int_0^t \mb_{\pa\mo}(Y_s^{\e,\eta})\,d|K^{\e,\eta}|_s, & \\[5mm]
\mathcal{L}(Y_t^{\e,\eta})= \rho^{\e,\eta}_t . &
\end{array} \right.
\end{equation}
\end{lemma}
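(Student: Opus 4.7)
The plan is to build the solution to \eqref{ApproNLSDER} by a Picard fixed-point argument on continuous flows of marginal laws. For $\nu=(\nu_t)_{t\in[0,T]}\in \mc([0,T];\pp_1(\overline\mo))$ equipped with the sup-in-time $\mathcal{W}_1$ metric, I freeze the McKean--Vlasov nonlinearity and consider the linear reflected SIE
\[
Y^\nu_t = Y_0 + \int_0^t V^{\e,\eta}[\nu_s](Y^\nu_s)\,ds + \sqrt{2\sigma}\,B_t - K^\nu_t,\qquad dK^\nu_t = n(Y^\nu_t)\mb_{\pa\mo}(Y^\nu_t)\,d|K^\nu|_t.
\]
Because the mollifiers $\phi_\e,\phi_\eta$ regularize the discontinuous indicator and $\nabla\varphi\in W^{1,\infty}(\R^d)$, the frozen drift $(s,x)\mapsto V^{\e,\eta}[\nu_s](x)$ is bounded by $\|\nabla\varphi\|_{L^\infty}$ and Lipschitz in $x$ with a constant $L_{\e,\eta}$ depending on $\e,\eta,\|\nabla\varphi\|_{W^{1,\infty}},\|w\|_{Lip}$. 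Since $\mo$ is bounded convex with Lipschitz boundary, Tanaka's theorem \cite{Tan} (or \cite{Li-Si}) produces a unique strong $\overline\mo$-valued solution $(Y^\nu,K^\nu)$. This defines the map $\Phi:\nu\mapsto(\LL(Y^\nu_t))_{t\in[0,T]}$.

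I would then show $\Phi$ is a contraction on a sufficiently small interval. Given $\nu^1,\nu^2$, I couple the two linearized systems with the same $B$ and $Y_0$. Itô's formula applied to $|Y^{\nu^1}_t-Y^{\nu^2}_t|^2$ kills the Brownian martingale parts and yields
\[
|Y^{\nu^1}_t-Y^{\nu^2}_t|^2 = 2\int_0^t \lal Y^{\nu^1}_s-Y^{\nu^2}_s,\,V^{\e,\eta}[\nu^1_s](Y^{\nu^1}_s)-V^{\e,\eta}[\nu^2_s](Y^{\nu^2}_s)\ral\,ds - 2\int_0^t \lal Y^{\nu^1}_s-Y^{\nu^2}_s,\,d(K^{\nu^1}_s-K^{\nu^2}_s)\ral.
\]
The reflection term is non-positive by the convexity relation \eqref{eq_conv}: $dK^{\nu^i}_s$ points along $n(Y^{\nu^i}_s)$ on $\{Y^{\nu^i}_s\in\pa\mo\}$, and $\lal n(Y^{\nu^i}_s),Y^{\nu^i}_s-Y^{\nu^j}_s\ral\ge 0$ there. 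Writing
\[
|V^{\e,\eta}[\mu](x)-V^{\e,\eta}[\mu'](x')|\le C_{\e,\eta}\bigl(|x-x'|+\mathcal{W}_1(\mu,\mu')\bigr),
\]
which follows from the smoothness of the mollified kernel, and invoking Grönwall, I get
\[
\sup_{t\in[0,T]}\E\bigl[|Y^{\nu^1}_t-Y^{\nu^2}_t|^2\bigr]\le C(T,\e,\eta)\sup_{t\in[0,T]}\mathcal{W}_1(\nu^1_t,\nu^2_t)^2,
\]
with $C(T,\e,\eta)\to 0$ as $T\to 0$. Since $\mathcal{W}_1(\LL(Y^{\nu^1}_t),\LL(Y^{\nu^2}_t))\le \E[|Y^{\nu^1}_t-Y^{\nu^2}_t|]$, the map $\Phi$ contracts on a small interval; Banach's theorem yields a unique fixed point, which is the unique strong solution to \eqref{ApproNLSDER} on that interval. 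Since the drift is globally bounded, the solution does not blow up, and one iterates the short-time argument to reach any prescribed horizon $T>0$.

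The main obstacle I expect is the rigorous handling of the reflection cross-terms in the Itô expansion, namely justifying
\[
\int_0^t \lal Y^{\nu^1}_s-Y^{\nu^2}_s,\,dK^{\nu^1}_s\ral\ge 0 \quad \IP\text{-a.s.},
\]
which proceeds by approximating the Stieltjes integral with respect to the bounded-variation process $|K^{\nu^1}|$ by Riemann sums, applying \eqref{eq_conv} pointwise with $w=Y^{\nu^1}_s\in\pa\mo$ and $w'=Y^{\nu^2}_s\in\overline\mo$, and passing to the limit. All remaining steps are routine once the mollification is in place, since it converts the discontinuous kernel of Lemma \ref{lem:roparg} into a smooth one; no uniformity of constants in $\e,\eta$ is required at this stage, as this lemma only asserts solvability at fixed mollification parameters.
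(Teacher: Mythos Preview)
Your argument is correct. The paper's own proof is a single sentence: it observes that after mollification the drift in \eqref{ApproNLSDER} is bounded and Lipschitz, and then invokes Sznitman's result \cite{Si} on nonlinear reflecting diffusions directly. What you have written is essentially the content of that cited theorem, rebuilt from its ingredients: Tanaka's pathwise solvability \cite{Tan} for the linear reflected problem with frozen law, the convexity inequality \eqref{eq_conv} to discard the reflection cross-terms in the It\^o expansion, a $\mathcal{W}_1$-Lipschitz bound on $\mu\mapsto V^{\e,\eta}[\mu]$ coming from Kantorovich--Rubinstein duality applied to the smooth mollified kernel, and a short-time contraction on $\mc([0,T];\pp_1(\overline\mo))$ iterated to global time. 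So the two proofs agree in spirit; the paper simply outsources the fixed-point step to the literature, while you carry it out explicitly. Two minor points you should make explicit if you keep your version: first, confirm that $\Phi$ lands in $\mc([0,T];\pp_1(\overline\mo))$, which follows from path continuity of $Y^\nu$ and boundedness of $\overline\mo$; second, note that the contraction constant depends only on $C_{\e,\eta}$ and not on the initial law, so the length of the short-time interval is uniform and the iteration reaches any $T>0$.
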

\begin{proof}
Since both drift and diffusion terms in the above regularized SIEs are smooth, we deduce from \cite{Si} the strong existence and uniqueness of the process $(Y_t^{\e,\eta})$ to the system \eqref{ApproNLSDER}. 
\end{proof}

We next provide some basic properties of the mollified indicator function in the lemma below. The proof of that can be found in \cite[Lemma 4.2]{CCHS}.

\begin{lemma} Denote
\[
\mb^{\e}_{K(x')}(x) = \int_{\R^d} \mb_{K(x')}(y-x)\phi_\e(y)\,dy.
\]
(i) For all $ \e > 0$, it holds
\bq\label{lem_diff1}
\int \lt|\mb_{K}^{\e}(x) - \mb_{K}(x)\rt| dx \leq |\pa^{2\e}K|.
\eq
(ii) For all $\e > 0$ and $x_1,y_1,x_2,y_2,v \in \R^d$ we have
\bq\label{lem_bdy}
|\mb_{K}^{\e}(y_1 - x_1) - \mb_{K}^{\e}(y_2 - x_2)| \leq \mb^{\e}_{\pa^{2|x_1 - x_2|}K}(y_1 - x_1) + \mb^{\e}_{\pa^{2|y_1 - y_2|}K}(y_1 - x_1).
\eq
(iii) For all $x \in \mo$ and $0 < \eta \leq 1$, it holds
\bq\label{lem_add}
\int_{\R^d}\lt|\mb_{K(w(x))}^{\eta,\e}(y-x) - \mb_{K(w(x))}^{\e}(y-x)\rt| dy \leq C\eta,
\eq
where $C$ is a positive constant independent of $\e$ and $\eta$.\newline
\end{lemma}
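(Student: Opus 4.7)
For \eqref{lem_diff1}, the difference $\mb_K^\e(x) - \mb_K(x)$ vanishes whenever the distance from $x$ to $\partial K$ exceeds $\e$, since then $\mb_K$ is constant across the translate of $\mathrm{supp}(\phi_\e)$ that enters the convolution. Hence the support of $|\mb_K^\e - \mb_K|$ is contained in $\partial^\e K \subset \partial^{2\e}K$, and as the integrand is bounded by $1$, integrating gives the claim.

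For \eqref{lem_bdy}, the plan is to lift Lemma \ref{lem_est2} through the convolution. After a change of variables,
\[
\mb_K^\e(y_i - x_i) = \int_{\R^d} \mb_K\bigl((y_i - x_i) + z\bigr)\,\phi_\e(z)\,dz,
\]
so I apply Lemma \ref{lem_est2} pointwise in $z$, using shifted $y$-arguments $y_1+z, y_2+z$ and original $x$-arguments $x_1, x_2$ (the $y$-difference is unchanged at $|y_1-y_2|$), and then integrate against $\phi_\e(z)\,dz$. The right-hand side produces exactly the two mollified indicators stated in the claim.

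For \eqref{lem_add}, I unfold both definitions and write the difference as
\[
\mb^{\eta,\e}_{K(w(x))}(y-x) - \mb^\e_{K(w(x))}(y-x) = \int\!\!\int \bigl[\mb_{K(w(x)-y')} - \mb_{K(w(x))}\bigr]\bigl(z - (y-x)\bigr)\,\phi_\e(z)\,\phi_\eta(y')\,dz\,dy'.
\]
Taking absolute values, integrating in $y$, and applying Fubini, the inner $y$-integral becomes $|K(w(x)-y')\,\Delta\,K(w(x))|$. By ${\bf (H2)}$(ii)--(iii) this symmetric difference has measure $\lesssim |y'| \leq \eta$ on the support of $\phi_\eta$ (using the inclusion into $\Theta(w(x))^{C|y'|,+}$ and the enlargement estimate; the case $C|y'|\geq 1$ is trivially controlled using the global bound $|\mathcal{K}|$ from ${\bf (H1)}$). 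Since $\phi_\e, \phi_\eta$ are probability densities, we obtain the bound $C\eta$ with $C$ independent of $\e, \eta$.

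The only genuinely nontrivial ingredient, and hence the main obstacle, is the linear bound $|K(x)\,\Delta\,K(x')| \leq C|x-x'|$ extracted from ${\bf (H2)}$(ii)--(iii); once this is in hand, parts (i) and (ii) are routine mollification manipulations, and part (iii) is an application of Fubini.
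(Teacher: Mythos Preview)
Your argument is correct and follows the natural route. The paper itself does not give a proof of this lemma; it simply refers to \cite[Lemma~4.2]{CCHS}. Your treatment---the support argument for \eqref{lem_diff1}, lifting Lemma~\ref{lem_est2} through the convolution for \eqref{lem_bdy}, and Fubini together with the bound $|K(a)\Delta K(a')|\leq C|a-a'|$ from ${\bf (H1)}$--${\bf (H2)}$ (cf.\ Remark~\ref{rmk_sym}) for \eqref{lem_add}---is exactly the standard proof one would expect, and is presumably what appears in \cite{CCHS}.

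One minor remark: in your change of variables for part (ii) you write $\mb_K^\e(u)=\int \mb_K(u+z)\phi_\e(z)\,dz$, whereas the paper's displayed definition reads $\int \mb_K(y-u)\phi_\e(y)\,dy$. These agree once $\phi_\e$ is even (the standard choice), so no harm is done; just be aware of the sign convention when writing things out in full.
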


In the proposition below, we show the existence of weak solutions to the corresponding continuity equation to \eqref{ApproNLSDER} with no-flux boundary condition. We also provide a uniform bound estimate of the solution in regularization parameters.
\begin{proposition}\label{lem:EstLinf}
The family of time marginals $(\rho_t^{\e,\eta})_{t\geq 0}$ of the solution to \eqref{ApproNLSDER}, is a global-in-time weak solution of 
\begin{equation}\label{ApproNLPDER}
\left\{\begin{matrix}
\displaystyle \partial_t \rho_t^{\e,\eta}  +\nabla \cdot \left(V^{\e,\eta}[\rho_t^{\e,\eta}]\rho_t^{\e,\eta}\right)=\sigma\Delta \rho_t^{\e,\eta}, \quad x \in \mo, \quad t > 0, \\[2mm] 
\displaystyle  V^{\e,\eta}[\rho_t^{\e,\eta}](x)=\int_{\mo} \,\mb^{\e,\eta}_{K(w(x))}(y-x)\nabla\varphi(x-y)\rho^{\e,\eta}_t(dy),  \\[2mm]
\displaystyle \lt\lal\sigma\nabla \rho_t^{\e,\eta}-\rho_t^{\e,\eta}V^{\e,\eta}[\rho_t^{\e,\eta}], n \rt\ral=0 \quad \mbox{on} \quad \pa\mo,
\end{matrix}\right.
\end{equation}
with $\rho_0=\mathcal{L}(Y_0)\in (L^1\cap L^{\infty})(\mo)$.  Furthermore, there exist a time $T>0$ and a constant $C>0$ such that 
\[
\sup_{0 \leq t \leq T}\| \rho_t^{\e,\eta}\|_{L^1\cap L^\infty}\leq C,
\]
where $C > 0$ is independent of $\e,\eta > 0$.
\end{proposition}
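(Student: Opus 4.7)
The plan proceeds in three steps: first derive \eqref{ApproNLPDER} from the regularized SDE by It\^o's formula, then observe that the $L^1$ bound comes for free from the probabilistic interpretation, and finally close the uniform $L^\infty$ bound via a Duhamel representation combined with a singular Gronwall argument.

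For the PDE, I would fix $f\in C^2(\overline\mo)$ satisfying the Neumann condition $\nabla f\cdot n=0$ on $\pa\mo$, apply It\^o's formula to $f(Y_t^{\e,\eta})$ along \eqref{ApproNLSDER}, and take expectation. The stochastic integral drops as a zero-mean martingale, and the reflection term $-\int_0^t\nabla f(Y_s^{\e,\eta})\cdot n(Y_s^{\e,\eta})\,d|K^{\e,\eta}|_s$ vanishes by the Neumann choice of $f$; using $\LL(Y_s^{\e,\eta})=\rho_s^{\e,\eta}$ one reads off exactly the weak form of \eqref{ApproNLPDER} with the no-flux boundary condition. The $L^1$ bound is immediate: $\rho_t^{\e,\eta}\in\pp(\overline\mo)$, so $\|\rho_t^{\e,\eta}\|_{L^1(\mo)}=1$ for all $t\geq 0$.

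The main effort will be the uniform-in-$(\e,\eta)$ $L^\infty$ bound. The key a priori input is the uniform estimate $\|V^{\e,\eta}[\mu]\|_{L^\infty}\le \|\nabla\varphi\|_{L^\infty}$ for every $\mu\in\pp(\mo)$, which follows at once from $0\le\mb^{\e,\eta}_{K(\cdot)}\le 1$. Since $\nabla\cdot V^{\e,\eta}$ blows up as $\e,\eta\downarrow 0$, neither the maximum principle nor a direct $L^p$-energy estimate yields a bound independent of the regularization. Instead, letting $(S_t)_{t\ge 0}$ and $p_N$ denote the Neumann heat semigroup and Neumann heat kernel on $\mo$, I would test the weak form against $\psi_s=S_{t-s}\phi$ for $\phi\in C^2(\overline\mo)$ with $\nabla\phi\cdot n=0$ on $\pa\mo$. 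Using $\partial_s\psi_s=-\sigma\Delta\psi_s$ together with $\nabla\psi_s\cdot n=0$ on $\pa\mo$, the weak form collapses to $\frac{d}{ds}\int\rho_s^{\e,\eta}\psi_s\,dx=\int\rho_s^{\e,\eta}V^{\e,\eta}[\rho_s^{\e,\eta}]\cdot\nabla\psi_s\,dx$, and after integrating in $s$ and identifying the integrand against $\phi$ one gets the mild representation
\[
\rho_t^{\e,\eta}(x)=\int_\mo p_N(t,x,y)\rho_0(y)\,dy+\int_0^t\!\int_\mo\nabla_y p_N(t-s,x,y)\cdot\lt(\rho_s^{\e,\eta}V^{\e,\eta}[\rho_s^{\e,\eta}]\rt)\!(y)\,dy\,ds.
\]
Combining the $L^\infty$-contractivity of $S_t$ with the standard gradient heat-kernel estimate $\int_\mo|\nabla_y p_N(t,x,y)|\,dy\le Ct^{-1/2}$ on bounded convex Lipschitz domains, this yields
\[
\|\rho_t^{\e,\eta}\|_{L^\infty}\le \|\rho_0\|_{L^\infty}+C\|\nabla\varphi\|_{L^\infty}\int_0^t(t-s)^{-1/2}\|\rho_s^{\e,\eta}\|_{L^\infty}\,ds,
\]
and the singular Gronwall inequality of Appendix \ref{app_gron} then produces $\sup_{0\le t\le T}\|\rho_t^{\e,\eta}\|_{L^\infty}\le C$ on some $[0,T]$, with $T$ and $C$ depending only on $\|\rho_0\|_{L^\infty}$, $\sigma$, and $\|\nabla\varphi\|_{L^\infty}$, hence uniformly in $(\e,\eta)$. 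The hardest part will be precisely this $L^\infty$ step: the blow-up of $\nabla\cdot V^{\e,\eta}$ rules out elementary max-principle or $L^p$-energy arguments and forces one to displace the spatial derivative onto the heat kernel, where the integrable singularity $(t-s)^{-1/2}$ is absorbed by the singular Gronwall lemma.
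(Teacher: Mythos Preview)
Your derivation of the weak form via It\^o's formula and your observation that $\|\rho_t^{\e,\eta}\|_{L^1}=1$ match the paper exactly. The $L^\infty$ argument, however, diverges from the paper, and the reason you give for needing Duhamel is based on a misconception.

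You assert that $\nabla\cdot V^{\e,\eta}[\rho]$ blows up as $\e,\eta\downarrow 0$, which rules out an $L^p$-energy estimate. The paper shows precisely the opposite: using a difference-quotient computation together with assumption ${\bf (H2)}$ (the thin-set bounds $|\partial^{2h}K|\le Ch$ and $|K(w(x+he_i))\Delta K(w(x))|\le C\|w\|_{Lip}h$), it proves
\[
\|\nabla\cdot V^{\e,\eta}[\rho_t^{\e,\eta}]\|_{L^\infty}\le C\|\rho_t^{\e,\eta}\|_{L^\infty}
\]
with $C$ independent of $\e,\eta$. The point is that although $\nabla_x\mb^{\e,\eta}_{K(w(x))}(y-x)$ is of size $O(\e^{-1})$ pointwise, it is supported in a set of $y$-measure $O(\e)$, so after integrating against a bounded density the bound is uniform. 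With this in hand the paper runs exactly the $L^p$-energy argument you dismissed: $\frac{d}{dt}\|\rho_t^{\e,\eta}\|_{L^p}^p\le (p-1)\|\nabla\cdot V^{\e,\eta}\|_{L^\infty}\|\rho_t^{\e,\eta}\|_{L^p}^p$, Gronwall, send $p\to\infty$, and close via Lemma~\ref{lem_gron}(i).

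Your Duhamel route is a legitimate alternative and has the virtue of using only the trivial bound $\|V^{\e,\eta}[\mu]\|_{L^\infty}\le\|\nabla\varphi\|_{L^\infty}$, never touching the divergence. It does, however, cost you two outside ingredients: the Neumann heat-kernel gradient estimate $\int_\mo|\nabla_y p_N(t,x,y)|\,dy\le C t^{-1/2}$ on convex Lipschitz domains, and a singular (Henry-type) Gronwall lemma. The latter is \emph{not} in Appendix~\ref{app_gron} as you claim --- Lemma~\ref{lem_gron} contains only a nonlinear exponential inequality and a $p$-power inequality, neither of which handles the $(t-s)^{-1/2}$ kernel --- so you would need to state and prove it separately. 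By contrast, the paper's approach stays entirely within the assumptions ${\bf (H1)}$--${\bf (H2)}$ and the elementary Gronwall tools already provided.
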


\begin{proof} $\bullet$ ({\it Existence of weak solutions}): For $\phi \in \mc^\infty(\mo)$ satisfying $\lal \nabla \phi(x), n(x)\ral=0$ for $x \in \pa\mo$, by applying It\^o's formula, we get
$$\begin{aligned}
\phi(Y_t^{\e,\eta})&=\phi(Y_0)+\int_0^t \left \langle  \nabla \phi(Y_s^{\e,\eta}),dY_s^{\e,\eta} \right \rangle+\frac{1}{2}\int_0^t (d Y_s^{\e,\eta})^t \,\nabla^2 \phi(Y_s^{\e,\eta})\, d Y_s^{\e,\eta} \\
&=\phi(Y_0)+\int_0^t \lal V^{\e,\eta}[\rho_s^{\e,\eta}](Y_s^{\e,\eta}), \nabla \phi(Y_s^{\e,\eta})\ral ds +\sqrt{2\sigma}\int_0^t \left \langle \nabla\phi(Y_s^{\e,\eta}),dB_s \right \rangle\\
&\quad  -\int_0^t \left \langle \nabla \phi(Y_s^{\e,\eta}),dK^{\eta,\e}_s \right \rangle+\sigma \int_0^t \Delta \phi(Y_s^{\e,\eta})\,ds.
\end{aligned}$$
Since
\[
\int_0^t \left \langle \nabla \phi(Y_s^{\e,\eta}),dK^{\e,\eta}_s \right \rangle ds=\int_0^t \left \langle \nabla \phi(Y_s^{\e,\eta}), n (Y_s^{\e,\eta}) \right \rangle d|K^{\e,\eta}|_s=0,
\]
by taking the expectation and using the fact that $\rho^{\e,\eta}_t =\mathcal{L}(Y^{\e,\eta}_t)$, we obtain
$$\begin{aligned}
\int_{\mo} \phi(x)\rho_t^{\e,\eta}(dx) &=\int_{\mo} \phi(x)\rho_0(dx)+\int_0^t\int_{\mo} V^{\e,\eta}[\rho_s^{\e,\eta}](x)\cdot\nabla \phi(x)\,\rho^{\e,\eta}_s(dx)\,ds +\sigma\int_0^t \int_{\mo} \Delta\phi(x)\rho^{\e,\eta}_s(dx)\,ds.
\end{aligned}$$
This implies that the family of time marginals of the process solutions to \eqref{ApproNLSDER} is a weak solution for the equation \eqref{ApproNLPDER}. \newline
	
$\bullet$ ({\it Uniform bound estimate}): It is straightforward to get that for $p \geq 1$	
$$\begin{aligned}
\frac{d}{dt} \int_{\mo} (\rho_t^{\e, \eta})^p\,dx	&= p\int_{\mo} \partial_t \rho^{\e, \eta}_t(\rho_t^{\e, \eta})^{p-1}dx\\
&\quad =- p\int_{\mo} \nabla \cdot (V^{\e, \eta}[\rho_t^{\e, \eta}]\rho_t^{\e, \eta})(\rho_t^{\e, \eta})^{p-1}dx+\sigma p\int_{\mo}\Delta \rho_t^{\e, \eta}(\rho_t^{\e, \eta})^{p-1}dx\\
&\quad =: I_1 + I_2,
\end{aligned}$$
where $I_i,i=1,2$ are estimated as follows.
$$\begin{aligned}
I_1 &= p\int_\mo V^{\e, \eta}[\rho^{\e, \eta}_t]\rho^{\e, \eta}_t \cdot \nabla \lt((\rho^{\e, \eta}_t)^{p-1} \rt) - p\int_{\pa\mo}V^{\e, \eta}[\rho^{\e, \eta}_t]\rho^{\e, \eta}_t \cdot n(x) (\rho^{\e, \eta}_t)^{p-1}\,dS(x) \cr
&= - (p-1) \int_{\mo} \lt(\nabla \cdot V^{\e, \eta}[\rho_t^{\e, \eta}] \rt)(\rho_t^{\e, \eta})^p\,dx -  p\int_{\pa\mo}V^\e[\rho^{\e, \eta}_t]\rho^{\e, \eta}_t \cdot n(x) (\rho^{\e, \eta}_t)^{p-1}\,dS(x) ,\cr
I_2 &= - \sigma p \int_{\mo}  \nabla \rho_t^{\e, \eta}\cdot\nabla \lt((\rho_t^{\e, \eta})^{p-1}\rt)dx + \sigma p \int_{\pa\mo} \nabla \rho_t^{\e, \eta}\cdot n(x)(\rho_t^{\e, \eta})^{p-1}\,dS(x)\cr
&= - \sigma p(p-1)\int_{\mo} (\rho_t^{\e, \eta})^{p-2} |\nabla \rho_t^{\e, \eta}|^2 dx + \sigma p \int_{\pa\mo} \nabla \rho_t^{\e, \eta}\cdot n(x)(\rho_t^{\e, \eta})^{p-1}\,dS(x).\cr
\end{aligned}$$
Thus we have
$$\begin{aligned}
\frac{d}{dt}\|\rho^{\e,\eta}_t\|_{L^p}^p &= -(p-1) \int_{\mo} \lt(\nabla \cdot V^{\e,\eta}[\rho_t^{\e,\eta}] \rt)(\rho_t^{\e,\eta})^p\,dx- \sigma p(p-1)\int_{\mo} (\rho_t^{\e,\eta})^{p-2} |\nabla \rho_t^{\e,\eta}|^2 dx\cr
&\leq (p-1)\|\nabla \cdot V^{\e,\eta}[\rho_t^{\e,\eta}]\|_{L^\infty}\|\rho^{\e,\eta}_t\|_{L^p}^p,
\end{aligned}$$
due to the boundary condition. On the other hand, we can estimate
\bq\label{est_div}
\|\nabla \cdot V^{\e,\eta}[\rho_t^{\e,\eta}]\|_{L^\infty} \leq C\|\rho^{\e,\eta}_t\|_{L^\infty},
\eq
where $C > 0$ is independent of $\e, \eta$, and $p$. Indeed, for $i=1,\cdots,d$, and $|h| \leq 1$, we get
$$\begin{aligned}
&\int_\mo \lt(\pa_i \varphi(x + he_i -y)\mb^{\e,\eta}_{K(w(x + he_i))}(y-x - he_i) - \pa_i \varphi(x -y)\mb^{\e,\eta}_{K(w(x))}(y-x)\rt)\rho^{\e,\eta}_t(y)\,dy\cr
&\quad = \int_\mo \lt(\pa_i \varphi(x + he_i -y)- \pa_i \varphi(x -y)\rt)\mb^{\e,\eta}_{K(w(x + he_i))}( y - x - he_i) \rho^{\e,\eta}_t(y)\,dy\cr
&\qquad + \int_\mo \pa_i \varphi(x -y)\lt(\mb^{\e,\eta}_{K(w(x + he_i))}(y - x - he_i) - \mb^{\e,\eta}_{K(w(x))}(y - x)\rt)\rho^{\e,\eta}_t(y)\,dy,
\end{aligned}$$
where the first term on the right hand side of the above equality can be easily estimated as
\[
\lt|\int_\mo \lt(\pa_i \varphi(x + he_i -y)- \pa_i \varphi(x -y)\rt)\mb^{\e,\eta}_{K(w(x + he_i))}(y - x - he_i) \rho^{\e,\eta}_t(y)\,dy\rt| \leq h\|\nabla \varphi\|_{Lip}\|\rho^{\e,\eta}_t\|_{L^1}.
\]
For the estimate of second term, we use Lemma \ref{lem_est2}, Remark \ref{rmk_sym}, \eqref{lem_bdy}, together with the assumption ${\bf (H2)}$ to find
$$\begin{aligned}
&\lt|\int_\mo \pa_i \varphi(x -y)\lt(\mb^{\e,\eta}_{K(w(x + he_i))}(y - x - he_i) - \mb^{\e,\eta}_{K(w(x))}(y - x)\rt)\rho^{\e,\eta}_t(y)\,dy\rt|\cr
&\quad = \bigg|\int_{\mo \times \R^d} \pa_i \varphi(x -y)\lt(\mb^{\e}_{K(w(x + he_i)-y')}(y - x - he_i) - \mb^{\e}_{K(w(x+h e_i)-y')}(y - x)\rt)\rho^{\e,\eta}_t(y)\phi_\eta(y')\,dydy'\cr
&\qquad + \int_{\mo \times \R^d} \pa_i \varphi(x -y)\lt(\mb^{\e}_{K(w(x + he_i)-y')}(y - x) - \mb^{\e}_{K(w(x)-y')}(y - x)\rt)\rho^{\e,\eta}_t(y)\phi_\eta(y')\,dydy'\bigg|\cr 
&\quad \leq \|\nabla \varphi\|_{L^\infty}\int_{\mo \times \R^d} \mb^\e_{\pa^{2h}K(w(x+ h e_i)-y')}(y- x - he_i)\rho^{\e,\eta}_t(y)\phi_\eta(y')\,dydy' \cr
&\qquad + \|\nabla \varphi\|_{L^\infty}\int_{\mo \times \R^d} \mb^\e_{K(w(x+h e_i)-y') \Delta K(w(x)-y')}(y-x)\rho^{\e,\eta}_t(y)\phi_\eta(y')\,dydy'\cr
&\quad \leq \|\nabla \varphi\|_{L^\infty}\|\rho^{\e,\eta}_t\|_{L^\infty}\lt(\sup_{x \in \R^d}|\pa^{2h}K(x)| + \int_{\R^d} |K(w(x+h e_i)-y') \Delta K(w(x)-y')|\phi_\eta(y')\,dy' \rt)\cr
&\quad \leq C h \|\nabla \varphi\|_{L^\infty}\|\rho^{\e,\eta}_t\|_{L^\infty}\lt(1 + \|w\|_{Lip} \rt).
\end{aligned}$$
This proves that inequality \eqref{est_div} holds. Thus we obtain
\[
\frac{d}{dt}\left\|\rho_t^{\e, \eta} \right\|^p_{L^p} \leq Cp\|\rho_t^{\e, \eta}\|_{L^\infty}\left\|\rho_t^{\e, \eta} \right\|^p_{L^p}.
\]
By applying Gronwall's inequality, we get
\[
\|\rho_t^{\e, \eta}\|_{L^p}^p \leq \|\rho_0\|_{L^p}^p \exp\lt(Cp\int_0^t \|\rho_s^{\e, \eta}\|_{L^\infty}ds \rt)\quad  \mbox{i.e.,} \quad 
\|\rho_t^{\e, \eta}\|_{L^p} \leq \|\rho_0\|_{L^p} \exp\lt( C\int_0^t \|\rho_s^{\e, \eta}\|_{L^\infty}ds \rt).
\]
We then send $p \to \infty$ to find
\[
\|\rho_t^{\e, \eta}\|_{L^\infty} \leq \|\rho_0\|_{L^\infty} \exp\lt( C\int_0^t \|\rho_s^{\e, \eta}\|_{L^\infty}ds \rt).
\]
We finally use Lemma \ref{lem_gron} (i) with $f(t) = \|\rho_t^{\e, \eta}\|_{L^\infty}$ to have
\[
\|\rho_t^{\e, \eta}\|_{L^\infty} \leq \frac{\|\rho_0\|_{L^\infty}}{1 - C\|\rho_0\|_{L^\infty}t},
\]
which concludes the proof.
\end{proof}
\subsection{Existence of solutions for the nonlinear SIEs}\label{sec:NSDE}
In this part, we show the existence of strong solutions to \eqref{sys_NLS_R} by obtaining the weak-strong stability estimate.
\begin{proposition}\label{prop:exNLSDER}
There exists only one strong solution $[0,T]$ to the following nonlinear SIEs:
\begin{equation}\label{eq:NLSDER}
\left\{ \begin{array}{ll}
\displaystyle Y_t=Y_0+\int_0^t\left (\int_\mo \mb_{K(w(Y_s))}(y - Y_s)\nabla\varphi(Y_s-y) \rho_s(dy)\right )ds+\sqrt{2\sigma}B_t-K_t, \quad \mathcal{L}(Y_t) = \rho_t, \quad t > 0, & \\[3mm]
\displaystyle K_t=\int_0^t n (Y_s)\,d|K|_s, \quad |K|_t=\int_0^t \mb_{\pa\mo}(Y_s)\,d|K|_s,& \\[5mm]
\LL \left( Y_0\right)= \rho_0.&
\end{array} \right.
\end{equation}
Moreover, if $(Y^i_t)_{t \in (0,T]}, i=1,2$ are two solutions to \eqref{eq:NLSDER} with the initial data $(Y^i_0)$, respectively, and $\LL(Y^1_.) = \rho_.\in L^{\infty}(\mo\times[0,T])$ then we have the following stability estimate:
\[
\mathbb{P}\mbox{-}\esssup|Y^1_t-Y^2_t| \leq e^{C\int_0^t \|\rho_s\|_{L^1\cap L^\infty} ds} \,\mathbb{P}\mbox{-}\esssup|Y^1_0-Y^2_0| \quad \mbox{for} \quad t \in [0,T].
\]
\end{proposition}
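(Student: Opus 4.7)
The plan is to obtain existence by passing to the limit in the regularized system \eqref{ApproNLSDER}, whose uniform $L^1\cap L^\infty$ bound on the densities $\rho^{\e,\eta}_t$ is already supplied by Proposition \ref{lem:EstLinf}. The hinge of the whole argument is It\^o's formula applied to $|Y^1_t - Y^2_t|^2$ for two solutions driven by the \emph{same} Brownian motion. The martingale parts cancel, so no quadratic variation term appears, and I obtain
\[
|Y^1_t - Y^2_t|^2 = |Y^1_0 - Y^2_0|^2 + 2\int_0^t \langle Y^1_s - Y^2_s, V[\rho_s](Y^1_s) - V[\rho'_s](Y^2_s)\rangle\,ds - 2\int_0^t \langle Y^1_s - Y^2_s, dK^1_s - dK^2_s\rangle.
\]

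Convexity of $\mo$, recorded in \eqref{eq_conv}, forces both reflection integrals above to be non-positive after the minus sign: on the support of $d|K^1|_s$ one has $Y^1_s\in\pa\mo$ and $Y^2_s\in\overline\mo$, so $\langle Y^1_s - Y^2_s, n(Y^1_s)\rangle \geq 0$, and symmetrically for $K^2$. These pieces can thus be discarded from the upper bound. For the drift term I would invoke Lemma \ref{lem:roparg} with the coupling $\pi=\LL(Y^1_s,Y^2_s)$; the hypothesis $\LL(Y^1_\cdot)=\rho_\cdot\in L^\infty(\mo\times[0,T])$ is exactly what is needed to conclude $|V[\rho_s](Y^1_s) - V[\rho'_s](Y^2_s)| \leq C\|\rho_s\|_{L^1\cap L^\infty}\,\IP\mbox{-}\esssup|Y^1_s - Y^2_s|$ almost surely. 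Bounding the inner product by the product of norms, taking the $\IP$-essential supremum of both sides, and applying Gr\"onwall's inequality to the deterministic function $t\mapsto(\IP\mbox{-}\esssup|Y^1_t - Y^2_t|)^2$ then produces the exponential stability estimate stated in the proposition.

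For the existence part, I would run the same computation on the pair $(Y^{\e,\eta},Y^{\e',\eta'})$: the drift difference now contains, besides a weak-strong Lipschitz contribution controlled by the uniform bound of Proposition \ref{lem:EstLinf}, two regularization defects handled by \eqref{lem_diff1}--\eqref{lem_add} which tend to zero as $(\e,\eta),(\e',\eta')\to 0$. Together these show that $(Y^{\e,\eta})$ is Cauchy in the essential-supremum pseudo-metric, and the limit is the desired strong solution whose law inherits the uniform $L^1\cap L^\infty$ bound. The main obstacle I anticipate is the interplay between the pathwise nature of It\^o's formula and the statement of Lemma \ref{lem:roparg}, which is intrinsically about laws: I need the almost-sure bound to feature the deterministic quantity $\IP\mbox{-}\esssup|Y^1_s-Y^2_s|$ so that, after taking essential supremum, the Gr\"onwall argument closes on a single scalar function of $s$. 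This delicacy is absent in the classical Lipschitz framework of \cite{Si}, where $\mathcal{W}_p$-style estimates with finite $p$ already suffice.
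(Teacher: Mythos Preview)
Your plan coincides with the paper's proof for the stability estimate and for the Cauchy estimate on the regularized family $(Y^{\e,\eta})$: It\^o's formula on the squared difference, convexity of $\mo$ to discard the reflection terms via \eqref{eq_conv}, Lemma \ref{lem:roparg} for the drift, and a Gronwall argument (the paper uses Lemma \ref{lem_gron} (ii) with $p=2$ rather than taking the essential supremum first, but your variant works equally well). The obstacle you anticipate---reconciling the pathwise It\^o inequality with the law-based Lemma \ref{lem:roparg}---is in fact a non-issue, since the lemma already delivers a $\IP$-a.s.\ pointwise bound featuring the deterministic quantity $\IP\text{-}\esssup|Y^1_s-Y^2_s|$, which feeds directly into Gronwall.

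The genuine omission is in the passage to the limit. Having $(Y^{\e,\eta})$ Cauchy in the $\IP$-essential-supremum metric yields a limit process $Y$, but you still must verify that $Y$ solves \eqref{eq:NLSDER}, and in particular that the remainder $K_t := Y_0 + \int_0^t V[\rho_s](Y_s)\,ds + \sqrt{2\sigma}B_t - Y_t$ has the Skorokhod structure $K_t = \int_0^t n(Y_s)\,d|K|_s$, $|K|_t = \int_0^t \mb_{\pa\mo}(Y_s)\,d|K|_s$. This requires first showing the drift convergence
\[
\E\left[\left|\int_0^t \bigl(V[\rho_s](Y_s) - V^{\e,\eta}[\rho^{\e,\eta}_s](Y^{\e,\eta}_s)\bigr)\,ds\right|\right] \to 0,
\]
so that $K^{\e,\eta}\to K$, and then a second use of convexity: for any $w\in\overline\mo$ one passes to the limit in $\int_0^t \phi(s)\langle Y^{\e,\eta}_s - w, dK^{\e,\eta}_s\rangle \geq 0$ to identify the direction of $dK_s$ with $n(Y_s)$, and separately argues $\int_0^t \mb_{\mo}(Y_s)\,d|K|_s = 0$ via approximation by compactly supported test functions. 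None of this is automatic from the Cauchy property, and it is precisely the content of the paper's Step B; you should not skip it.
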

\begin{proof}For the proof, we split it into three steps:

$\bullet$ {\bf Step A} ({\it Cauchy estimate}): Let $\e,\e',\eta,\eta'>0$, and consider the solutions $Y_.^{\e,\eta}$ and $Y_.^{\e',\eta'}$ to the regularized nonlinear SIEs \eqref{ApproNLSDER}. For notational simplicity, we set 

\[
V_t^{\e,\eta,\e',\eta'}(Y_t) := V^{\e,\eta}[\rho^{\e,\eta}_t](Y_t^{\e,\eta})-V^{\e',\eta'}[\rho^{\e',\eta'}_t](Y^{\e',\eta'}_t). 
\]
Applying Ito's formula yields
\begin{align*}
\begin{aligned}
\lt| Y_t^{\e,\eta}-Y^{\e',\eta'}_t \rt|^2&= 2\int_0^t \left\langle  Y_s^{\e,\eta}-Y^{\e',\eta'}_s ,V_s^{\e,\eta,\e',\eta'}(Y_s)\right\rangle \,ds\\
&\qquad -2\int_0^t \left\langle  Y_s^{\e,\eta}-Y^{\e',\eta'}_s ,n(Y_s^{\e,\eta})\right\rangle \, d|K^{\e,\eta}|_s-2\int_0^t \left\langle  Y_s^{\e',\eta'}-Y^{\e,\eta}_s ,n(Y_s^{\e',\eta'})\right\rangle \, d|K^{\e',\eta'}|_s\\
&\leq 2\int_0^t \left\langle  Y_s^{\e,\eta}-Y^{\e',\eta'}_s ,V_s^{\e,\eta,\e',\eta'}(Y_s)\right\rangle \,ds, 
\end{aligned}
\end{align*}	
since $(Y^{\e,\eta}_s-Y^{\e',\eta'}_s)\cdot n (Y^{\e,\eta}_s)\geq 0$, $d|K^{\e,\eta}|_s$ almost surely, due to the convexity of the domain $\mo$, see \eqref{eq_conv}. Thus it only remains to estimate the following term:
\[
J:=\left|V^{\e,\eta}[\rho^{\e,\eta}_s](Y_s^{\e,\eta})-V^{\e',\eta'}[\rho^{\e',\eta'}_s](Y^{\e',\eta'}_s)\right|.
\]
We decompose $J$ as
$$\begin{aligned}
J &\leq \left|V^{\e,\eta}[\rho^{\e,\eta}_s](Y_s^{\e,\eta})-V[\rho^{\e,\eta}_s](Y_s^{\e,\eta})\right|+ \left|V[\rho^{\e,\eta}_s](Y_s^{\e,\eta})-V[\rho^{\e',\eta'}_s](Y^{\e',\eta'}_s)\right|\cr
&\quad + \left|V[\rho^{\e',\eta'}_s](Y^{\e',\eta'}_s)-V^{\e',\eta'}[\rho^{\e',\eta'}_s](Y^{\e',\eta'}_s)\right|\cr
&=: J_1 + J_2 + J_3.
\end{aligned}$$
$\diamond$ Estimates of $J_1$ and $J_3$: Using \eqref{lem_diff1} and \eqref{lem_add}, we easily get for $\e,\eta \leq 1/2$
\[
\begin{split}
J_1&=\lt|\int\lt(\mb^{\e,\eta}_{K(w(Y_s^{\e,\eta}))}(y - Y_s^{\e,\eta})-\mb_{K(w(Y_s^{\e,\eta}))}(y - Y_s^{\e,\eta})\rt)\nabla\varphi(Y_s^{\e,\eta}-y) \rho_s^{\e,\eta}(dy) \rt|\\
&\leq\lt|\int\lt(\mb^{\e,\eta}_{K(w(Y_s^{\e,\eta}))}(y - Y_s^{\e,\eta})-\mb^{\e}_{K(w(Y_s^{\e,\eta}))}(y - Y_s^{\e,\eta})\rt)\nabla\varphi(Y_s^{\e,\eta}-y) \rho_s^{\e,\eta}(dy) \rt|\\
&\quad +\lt|\int\lt(\mb^{\e}_{K(w(Y_s^{\e,\eta}))}(y - Y_s^{\e,\eta})-\mb_{K(w(Y_s^{\e,\eta}))}(y - Y_s^{\e,\eta})\rt)\nabla\varphi(Y_s^{\e,\eta}-y) \rho_s^{\e,\eta}(dy) \rt|\\
&\leq C\|\nabla \varphi\|_{L^1 \cap L^\infty}\|\rho^{\e,\eta}_s\|_{L^\infty}(\e+\eta),
\end{split}
\]
where $C > 0$ is independent of $\e, \eta>0$. Employing the same argument as above, we estimate $J_3$ as
\[
J_3 \leq C\|\nabla \varphi\|_{L^\infty}\|\rho^{\e',\eta'}_s\|_{L^\infty}(\e'+\eta') \quad \mbox{for} \quad \e',\eta' < 1/2.
\]
$\diamond$ Estimate of $J_2$: It follows from Lemma \ref{lem:roparg} that
$$
J_2\leq C\|\rho_s^{\e,\eta}\|_{L^1\cap L^\infty}\IP\mbox{-}\esssup|Y^{\e,\eta}_s-Y_s^{\e',\eta'}|.
$$ 
Combining all the above estimates, we find
$$\begin{aligned}
|Y_t^{\e,\eta}-Y_t^{\e',\eta'}|^2 &\leq C\int_0^t \|\rho_s^{\e,\eta}\|_{L^1\cap L^\infty}\lt(\IP\mbox{-}\esssup|Y^{\e,\eta}_s-Y_s^{\e',\eta'}|\rt) |Y_s^{\e,\eta}-Y_s^{\e',\eta'}|\,ds\\
&\quad +  C\int_0^t \lt(\e+\eta+\e'+\eta'\rt)\lt(\|\rho_s^{\e,\eta}\|_{L^1\cap L^\infty}+\|\rho_s^{\e',\eta'}\|_{L^1\cap L^\infty}\rt) |Y_s^{\e,\eta}-Y_s^{\e',\eta'}|\,ds,
\end{aligned}$$
where $C >0$ is independent of $\e, \e',\eta,\eta' > 0$. Then using Lemma \ref{lem_gron} (ii) with $f(t) = |Y_t^{\e,\eta}-Y_t^{\e',\eta'}|$ and $p=2$ yields

\[
\sup_{0 \leq s\leq t}\IP\mbox{-}\esssup|Y^{\e,\eta}_s-Y_s^{\e',\eta'}| \leq C\lt(\e+\e'+\eta+\eta'\rt)\exp\lt(C\int_0^t \lt(\|\rho_s^{\e,\eta}\|_{L^1\cap L^\infty}+\|\rho_s^{\e',\eta'}\|_{L^1\cap L^\infty}\rt)ds \rt)
\]
for $t \in [0,T]$, where the constant $C$ is independent of $\e, \e', \eta$ and $\eta'$.

$\bullet$ {\bf Step B} ({\it Passing to the limit}): It follows from {\bf Step A} that there exists a limit process $(Y_t)_{t\in [0,T]}$ of the $(Y^{\e,\eta}_t)_{t\in [0,T]}$ as $\e,\eta\rightarrow 0$ in $L^1(\om,\mo\times [0,T])$. Using the fact that
$$
\sup_{t\in[0,T]}\mathcal{W}_{\infty}(\rho_t^{\e,\eta},\rho_t^{\e',\eta'})\leq \sup_{t\in[0,T]}\IP\mbox{-}\esssup|Y^{\e,\eta}_t-Y^{\e',\eta'}_t|,
$$
we also deduce that $(\rho_t^{\e,\eta})_{t\in[0,T]}$ is a Cauchy sequence in $\mc([0,T]; \mathcal{P}_1(\mo))$. Then, by completeness of this space, we define $\rho \in \mc([0,T]; \mathcal{P}_1(\mo))$ by $\rho_t := \lim_{\e,\eta \to 0} \rho_t^{\e,\eta}$ for $t \in [0,T]$. 

We now check $\eqref{eq:NLSDER}_2$. For this, we define $K_t$ as
\[
K_t:=Y_t-Y_0-\int_0^tV[\rho_s](Y_s)\,ds+\sqrt{2\sigma}B_t
\]
and recall 
\[
K_t^{\e,\eta}=Y^{\e,\eta}_t-Y_0-\int_0^t V^{\e,\eta}[\rho_s^{\e,\eta}](Y_s^{\e,\eta})\,ds+\sqrt{2\sigma}B_t.
\]
We then claim that
\bq\label{est_kk}
\E\lt[ \lt| \int_0^t \lt(V[\rho_s](Y_s)-V^{\e,\eta}[\rho_s^{\e,\eta}](Y_s^{\e,\eta})\rt)ds\rt| \rt] \to 0, \quad \mbox{as} \quad \e,\eta\rightarrow 0.
\eq
Let us assume that \eqref{est_kk} holds at the moment. We will give the proof of that in the last part of the step. We also notice that if \eqref{est_kk} holds, then the rest of the proof can be obtained by using the almost same argument as in \cite[Lemma 1.2]{Li-Si}. However, we provide the details of the proof for the reader's convenience and the completeness.

The convergence \eqref{est_kk} implies that $(K^{\e,\eta}_t)_{t\in [0,T]}$ converges to $(K_t)_{t\in [0,T]}$ in $L^{1}(\om,\mo\times [0,T])$ as $\e$ and $\eta$ go to $0$, and subsequently, for any $\phi \in \mc^{\infty}_c(\mo)$ with $0 \leq \phi \leq 1$, it deduces 
$$ 
\int_0^t\phi(Y_s)\,d|K|_s\leq \liminf_{\e,\eta \to 0}\int_0^t\phi(Y^{\e,\eta}_s)\,d|K^{\e,\eta}|_s=0.
$$
Taking an increasing sequence converging to $\mb_{\mo}$, we find
$$ 
\int_0^t\mb_{\mo}(Y_s)\,d|K|_s=0.
$$
This yields
$$ 
|K|_t=\int_0^td|K|_s=\int_0^t\lt(\mb_{\mo}(Y_s)+\mb_{\pa\mo}(Y_s)\rt)d|K|_s=\int_0^t\mb_{\pa\mo}(Y_s)\,d|K|_s.
$$
We now again use the convexity of the domain $\mo$ to get that for any $w\in \mo$ and $0 \leq \phi\in C([0,T])$ 
\[ 
\int_0^t   \left \langle Y^{\e,\eta}_s-w,n(Y^{\e,\eta}_s) \right \rangle\phi(s)\,d|K^{\e,\eta}|_s\geq 0, 
\]	
which can be rewritten as 
\[
\int_0^t  \phi(s) \left \langle Y^{\e,\eta}_s-w,dK^{\e,\eta}_s \right \rangle  \geq 0.
\]
We then let $\e,\eta \to 0$ to find that $d|K^{\e,\eta}|_s$ converges weakly (up to subsequence) to some measure $dm_s$ with $d|K|_s\leq dm_s$ and deduce 
	$$ \int_0^t  \phi(s) \left \langle Y_s-w,k_s \right \rangle \,dm_s\geq 0, $$	
	where we denoted $k_s$ a nonzero vector valued function such that $dK_s=k_sdm_s$ with $d|K|_s=|k_s|dm_s$ we obtain that 
\[
\left \langle Y_s-w,k_s \right \rangle\geq 0 \quad \mbox{for all} \quad w \in \overline\mo \quad m_s-\mbox{a.s.}.
\]
Then we find $k_s = |k_s| n(Y_s)$ due to the convexity of $\mo$. Hence we have
\[
K_t=\int_0^t n (Y_s)|k_s|\,dm_s=\int_0^t n(Y_s)\,d|K|_s. 
\]
{\it Proof of Claim \eqref{est_kk}}.- We first split \eqref{est_kk} into two parts:
$$\begin{aligned}
& \lt| \int_0^t \lt(V[\rho_s](Y_s)-V^{\e,\eta}[\rho_s^{\e,\eta}](Y_s^{\e,\eta})\rt)ds\rt|\cr
&\quad \leq  \lt| \int_0^t \lt(V[\rho_s](Y_s)-V[\rho_s^{\e,\eta}](Y_s^{\e,\eta})\rt)ds\rt|  + \lt| \int_0^t \lt(V[\rho^{\e,\eta}_s](Y^{\e,\eta}_s)-V^{\e,\eta}[\rho_s^{\e,\eta}](Y_s^{\e,\eta})\rt)ds\rt| \cr
&\quad =: L_1^{\e,\eta} + L_2^{\e,\eta}.
\end{aligned}$$
It follows from Lemma \ref{lem:roparg} that
$$\begin{aligned}
L_1^{\e,\eta} \leq C\|\nabla \varphi\|_{W^{1,\infty}}\|w\|_{Lip}\int_0^t\|\rho_s\|_{L^1 \cap L^\infty}\lt(\IP\mbox{-}\esssup|Y_s-Y_s^{\e,\eta}|\rt)\,ds \to 0,
\end{aligned}$$
as $\e,\eta \to 0$. Using similar arguments for the term $J_1$ in {\bf Step A}, $L_2$ can be estimated as 
\[
L_2^{\e,\eta} \leq C(\e+\eta)\|\nabla \varphi\|_{L^\infty}\int_0^t\|\rho^{\e,\eta}_s\|_{L^1\cap L^\infty}\,ds \quad \mbox{for} \quad  \e,\eta \leq 1/2.
\]
Thus $L_2^{\e,\eta} \to 0$ as $\e,\eta \to 0$, and this concludes the proof of claim.

$\bullet$ {\bf Step C} ({\it Stability estimate}): Using similar arguments as in {\bf Step A},  if $Y^i_t, i=1,2$ are two processes obtained as the above with the initial data $Y^i_0$, respectively, and $\LL(Y^i_t) = \rho^i_t, t \geq 0$ for $i=1,2$, using Lemma \ref{lem:roparg},  we easily find
$$\begin{aligned}
|Y_t^1-Y_t^2|^2 \leq C\int_0^t 2\|\rho_s\|_{L^1\cap L^\infty}\lt(\IP\mbox{-}\esssup|Y^1_s-Y_s^2|\rt) |Y_s^1-Y_s^2|\,ds\\
\end{aligned}$$
Similarly as in {\bf Step A}, we apply  Lemma \ref{lem_gron} (ii) with $p=2$ to conclude the stability estimate. 
\end{proof}


%
%
%
%

\subsection{Proof of Theorem \ref{thm_SDE}}
The existence and uniqueness of strong solutions to the nonlinear SIEs \eqref{sys_NLS_R} just follows from Proposition \ref{prop:exNLSDER}. For the existence of weak solutions for the equation \eqref{1_pde_R}, just take any test function $\phi\in\mc^{\infty}(\mo)$ with $\left\langle \nabla \phi(x),n(x) \right\rangle=0 $ on $\partial \mo$ and apply Ito's formula to the solution to \eqref{sys_NLS_R}, then find that its time marginals $(\rho_t)_{t\in [0,T]}$ solves the equation \eqref{1_pde_R} in the distributional sense. For the uniqueness of solutions, we move the stability estimate of solutions for SIEs obtained in Proposition \ref{prop:exNLSDER} on to some stability estimate for the corresponding PDE. In order to do so, we use the fact that for any solutions to \eqref{1_pde_R} can be seen as the time marginals of some solutions to \eqref{eq:NLSDER}. Let $(\tilde{\rho}_t)_{t \ge 0}$ be a weak solution to \eqref{1_pde_R} with the initial data $\tilde{\rho}_0\in \mathcal{P}_1(\mo)$ and $(\rho_t)_{t \ge 0}$ be another weak solution to \eqref{1_pde_R} with the initial data $\rho_0\in (L^1_+\cap L^{\infty})(\mo) \cap \mathcal{P}_1(\mo)$ such that $\rho\in L^\infty(0,T; (L^1_+ \cap L^{\infty})(\mo))\cap \mc([0,T];\pp_1(\mo))$. Then, by Lemma \ref{lem:app_b}, we can find a probability space $( \Omega,\mathbb{P},\left (\mathcal{F}_t \right )_{t\geq 0},\mathcal{F})$, a Brownian motion $(B_t)_{t \ge 0}$ on that basis and a process $(X_t)_{t \ge 0}$ solution to \eqref{eq:NLSDER}, which has the time marginal $\tilde{\rho}_t$ at any time $t \ge 0$. On that probability space, let $Y_0$ be a random variable on $\mo$ with the law $\rho_0$ independent of $(B_t)_{t\geq 0}$ such that 
\[
\mathcal{W}_\infty(\rho_0,\tilde{\rho}_0)=\IP\mbox{-}\esssup|Y_0-X_0|.
\]
Note that it is known that such an optimal coupling exists when $\rho_0$ is absolutely continuous with respect to the Lebesgue measure, see \cite{CPJ}. 
On the other hand, since $\rho$ has a sufficient regularity for the velocity field to be Lipschitz (see the proof of Proposition \ref{lem:EstLinf}), the standard theory on linear SDEs allows to build some stochastic process $(Y_t)_{t\geq 0}$ which is a solution to \eqref{eq:NLSDER} with the initial condition $Y_0$, and same Brownian motion as exhibited in the beginning of this step, such that its marginal at time $t$ is $\rho_t$. Hence, by definition of $\mathcal{W}_\infty$ distance and Proposition \ref{prop:exNLSDER}, it is straightforward to deduce that
\[
\mathcal{W}_\infty(\rho_t,\tilde{\rho}_t)\leq \IP\mbox{-}\esssup |Y_t-X_t|\leq \IP\mbox{-}\esssup |Y_0-X_0|e^{\int_0^t\|\rho_s\|_{L^1\cap L^{\infty}} ds}=\mathcal{W}_\infty(\rho_0,\tilde{\rho}_0)e^{\int_0^t\|\rho_s\|_{L^1\cap L^{\infty}} ds},
\] 
from which the uniqueness of solutions to \eqref{1_pde_R} follows. This completes the proof.
\begin{remark} It is worth noticing that we are not able at this point to extend this result to the case where $\mo$ is not convex but only satisfies the exterior sphere condition \eqref{eq_extc1}. This is due to the fact that we can only obtain the weak-strong stability estimate in the $\mathcal{W}_{\infty}$ metric. That is why we have to estimate the $\IP$ essential supremum of the distance between two regularized solutions. If the domain $\mo$ only satisfies the condition \eqref{eq_extc1}, then we need to use the similar strategy as in \cite{Li-Si}, together with approximating the $\IP$-essential supremum by $\E[|\cdot|^p]^{1/p}$ with $p \geq 1$. However, this gives a $p$-dependent constant in the estimates and it cannot be removed. Thus our arguments fail to the case in which the domain $\mo$ only satisfies the condition \eqref{eq_extc1}.
\end{remark}
	

%
%
%
%
\section{Propagation of chaos}\label{sec:mf}
\subsection{Law of large numbers like estimates}
In this subsection, we provide types of the law of large numbers estimates which relies on the nice property of our communication function observed in Lemma \ref{lem_est2}. 
\begin{lemma}\label{lem_(De)-Poiss}
Let $m,N\in \mathbb{N}$ with $N\geq (2m)^2$ and $Y_1, \cdots, Y_N$ be $N$ i.i.d. random variables with the law $\rho\in \mathcal{P}(\mo)$, $Y$ be independent of $Y_1, \cdots, Y_N$, and let $\rho^N$ be the associated empirical measure $\rho^N=\frac{1}{N}\sum_{i=1}^N\delta_{Y_i}$. Then we have
\[
\mathbb{E}\lt[\sup_{i=1,\cdots,N}\sup_{u\geq 0}\left|\int_\mo \mb_{\Theta(w(Y_i))^{u,+}}(y-Y_i)(\rho^N-\rho)(dy)\right|^{2m}\rt]^{\frac{1}{2m}} \leq 8e^{2m} N^{-\frac{1}{2}+\frm}.
\]
\end{lemma}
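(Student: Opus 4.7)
The core observation is that for each fixed $i$ the family $\{\Theta(w(Y_i))^{u,+}\}_{u\geq 0}$ is nested and monotone increasing in $u$, so the inner supremum is essentially a one-dimensional Kolmogorov--Smirnov statistic. My plan is to exploit this via a conditioning argument that isolates the fact that the ``test set'' in the integrand depends on one of the random samples, namely $Y_i$ itself.

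First I would fix $i$ and condition on $Y_i$. Introducing for $j \neq i$ the one-dimensional random variable
\[
V_j^{(i)} := \inf\bigl\{u \geq 0 \ : \ Y_j - Y_i \in \Theta(w(Y_i))^{u,+}\bigr\},
\]
the monotonicity of the family yields the key identity $\mb_{\Theta(w(Y_i))^{u,+}}(Y_j - Y_i) = \mb_{\{V_j^{(i)} \leq u\}}$. Conditionally on $Y_i$, the variables $(V_j^{(i)})_{j \neq i}$ are i.i.d.\ with common CDF $G_i(u) := \rho\bigl(Y_i + \Theta(w(Y_i))^{u,+}\bigr)$. After separating the contribution of the term $j = i$ (bounded by $2/N$ uniformly in $u$), we obtain
\[
\sup_{u\geq 0}\left|\int_\mo \mb_{\Theta(w(Y_i))^{u,+}}(y-Y_i)(\rho^N - \rho)(dy)\right| \leq \sup_{u\geq 0}|G^N_i(u) - G_i(u)| + \frac{2}{N},
\]
where $G^N_i$ denotes the empirical CDF of the $(V_j^{(i)})_{j\neq i}$.

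Next I would apply the Dvoretzky--Kiefer--Wolfowitz inequality, which gives conditionally on $Y_i$
\[
\IP\bigl(\sup_{u\geq 0}|G^N_i(u) - G_i(u)| > \lambda \ \big|\ Y_i\bigr) \leq 2 e^{-2(N-1)\lambda^2} \quad \text{for } \lambda>0.
\]
Multiplying by $2m\lambda^{2m-1}$ and integrating over $\lambda \geq 0$ yields $\mae[\sup_u |G^N_i - G_i|^{2m}\mid Y_i] \leq A_m/N^m$ for an explicit constant $A_m$ depending only on $m$ (of order $m!/2^m$ up to universal factors). Under the hypothesis $N\geq (2m)^2$ the $O(1/N)$ correction from the diagonal contribution is absorbed into this leading term.

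Finally, I would convert the supremum over $i$ into a sum of moments, costing a factor $N$:
\[
\mae\Bigl[\sup_{i,u}\Bigl|\int_\mo \mb_{\Theta(w(Y_i))^{u,+}}(y-Y_i)(\rho^N-\rho)(dy)\Bigr|^{2m}\Bigr] \leq \sum_{i=1}^{N} \mae\bigl[\sup_{u}|G^N_i - G_i|^{2m}\bigr] + \mathrm{l.o.t.} \leq \frac{A_m'}{N^{m-1}}.
\]
Taking the $2m$-th root produces the exponent $-\frac12 + \frm$, and a crude estimate of $(A_m')^{1/(2m)}$ via Stirling yields the stated constant $8e^{2m}$. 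The principal subtlety is the entanglement between the test set and one of the sample points; this is resolved cleanly by conditioning on $Y_i$, after which the nested structure of the sets $\Theta(w(Y_i))^{u,+}$ is exactly what reduces the problem to a standard 1D empirical CDF concentration.
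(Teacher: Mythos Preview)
Your argument is correct and constitutes a genuinely different route from the paper's proof. The paper proceeds by \emph{Poissonization}: it introduces a Poisson random measure $\varrho^N$ with intensity $N\rho$, compares $N\rho^N$ to $\varrho^N$ at the cost of $|K_N-N|$ (with $K_N$ Poisson$(N)$), and then observes that the nesting of the sets $\Theta(w(a))^{u,+}$ makes the increments live on disjoint regions. Since a Poisson random measure takes independent values on disjoint sets, $u\mapsto\int \mb_{\Theta(w(a))^{u,+}}(y-a)\,\overline{\varrho}^N(dy)$ is a martingale (conditionally on $Y_1$), and Doob's maximal inequality together with moment bounds for $|K_N-N|$ closes the estimate.

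You exploit the very same nesting, but instead of Poissonizing you recognise that $\mb_{\Theta(w(Y_i))^{u,+}}(Y_j-Y_i)=\mb_{\{V_j^{(i)}\le u\}}$ with $V_j^{(i)}=d\bigl(Y_j-Y_i,\Theta(w(Y_i))\bigr)$, which turns the inner supremum into a Kolmogorov--Smirnov statistic for $N-1$ i.i.d.\ samples (after conditioning on $Y_i$ and splitting off the diagonal term). The DKW inequality then gives $2m$-th moments of order $m!\,/(2N)^m$ directly. Your approach is more elementary---no auxiliary Poisson process, no martingale theory---and in fact yields a much tighter constant (of order $\sqrt{m}$ rather than $8e^{2m}$); the paper's route, on the other hand, makes the underlying martingale structure explicit and would adapt more readily if one needed maximal inequalities in a different norm or for more general monotone families where DKW is not available.
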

\begin{proof}
Let $(Y_n)_{n\in\mathbb{N}}$ be a sequence of independent random variables with the law $\rho\in \mathcal{P}(\mo)$, and $K_N$ be a Poisson random variable of parameter $N$ independent of $(Y_n)_{n\in\mathbb{N}}$. Define $\varrho^N $ by the following random measure 
\[
\varrho^N := \sum_{i=1}^{K_N}\delta_{Y_i}. 
\]
 Then $\varrho^N $ is a Poisson random measure of the intensity measure $N\rho$. It is straightforward to get $\left\|\varrho^N -N\rho^N\right\|_{TV} = |K_N-N|$, where $\| \cdot \|_{TV}$ represents the total variation of signed measures. This yields
 
 \[
 \begin{aligned}
 &\mathbb{E}\lt[\sup_{i=1,\cdots,N}\sup_{u\geq 0}\left|\int_\mo \mb_{\Theta(w(Y_i))^{u,+}}(y-Y_i)(\rho^N-\rho)(dy)\right|^{2m}\rt]^{\frm}\cr
 &\quad \leq \mathbb{E}\lt[\sum_{i=1}^N\lt(\sup_{u\geq 0}\left|\int_\mo \mb_{\Theta(w(Y_i))^{u,+}}(y-Y_i)(\rho^N-\rho)(dy)\right|\rt)^{2m}\rt]^{\frm}\\
 &\quad \leq \mathbb{E}\lt[N^{-2m}\sum_{i=1}^N\sup_{u\geq 0}\lt(\left|\int_{\mo}\mb_{\Theta(w(Y_i))^{u,+}}(y-Y_i)(N\rho^N-\varrho^N)(dy)+\mathcal{M}^{N,Y_i}_u\right|\rt)^{2m}\rt]^{\frm}\\
 &\quad \leq 	2\mathbb{E}\lt[N^{-2m}\sum_{i=1}^N\lt(\sup_{u\geq 0}\lt|\int_\mo \mb_{\Theta(w(Y_i))^{u,+}}(y-Y_i)(N\rho^N-\varrho^N)(dy)\rt|^{2m}+\sup_{u\geq 0}\lt|\mathcal{M}_u^{N,Y_i}\rt|^{2m}\rt)\rt]^{\frm}\\
 &\quad \leq 2\mathbb{E}\lt[N^{-2m}\sum_{i=1}^N\lt(\left\|\varrho^N -N\rho^N\right\|_{TV}^{2m}+\sup_{u\geq 0}\lt|\mathcal{M}_u^{N,Y_i}\rt|^{2m}\rt)\rt]^{\frm},
 \end{aligned}
 \]
where
\[
\mathcal{M}^{N,Y_i}_u:=\int_\mo \mb_{\Theta(w(Y_i))^{u,+}}(y-Y_i)\overline{\varrho}^N(dy) \quad \mbox{with} \quad \overline{\varrho}^N := \varrho^N  - N\rho.
\]
Since the $(Y_i)_{i=1,\cdots,N}$ are i.i.d, we find
\begin{equation}
\label{eq:est_4.1}
\begin{aligned}
 &\mathbb{E}\lt[\sup_{i=1,\cdots,N}\sup_{u\geq 0}\left|\int_\mo \mb_{\Theta(w(Y_i))^{u,+}}(y-Y_i)(\rho^N-\rho)(dy)\right|^{2m}\rt]^{\frm}\cr
 &\qquad \leq 	2N^{\frac{-2m+1}{2m}}\lt(\E\lt[|K_N-N|^{2m}\rt]^{\frm}+\E\lt[\sup_{u\geq 0}\lt|\mathcal{M}_u^{N,Y_1}\rt|^{2m} \rt]^{\frm}\rt).
 \end{aligned}
\end{equation}
We next observe that  $(\mathcal{M}_u^{N,Y_1})_{u\geq 0}$ conditioned to $Y_1$ is a martingale. Indeed, for $a \in \mo$, we define the filtration $(\mathcal{F}^a_{u})_{u\geq 0}$ as
\[
\mathcal{F}^a_{u}=\sigma\left\{\int_\mo h(y) \varrho^N(dy) \ | \ supp \ h \subseteq \Theta(w(a))^{r,+} +a, \, r\leq u \right\},
\]
Then, for $s > u$, we find 
$$\begin{aligned}
&\mathbb{E}\left [ \int_\mo \mb_{\Theta(w(a))^{s,+})}(y-a)\overline{\varrho}^N(dy) \,\big| \,\mathcal{F}^a_{u} \right ]\cr
&\qquad =\mathbb{E}\left [ \int_\mo \mb_{\Theta(w(a))^{s,+}\setminus\Theta(w(a))^{u,+}}(y-a)\overline{\varrho}^N(dy) \, \big| \, \mathcal{F}^a_{u} \right ]+\mathbb{E}\left [ \int_\mo \mb_{\Theta(w(a))^{u,+}}(y-a)\overline{\varrho}^N(dy) \, \big| \, \mathcal{F}^a_{u} \right]
\end{aligned}$$
due to the linearity of conditional expectation and the following property of indicator function
\[
\mb_{A} = \mb_{A\setminus B} + \mb_B \quad \mbox{for} \quad A, B \subset \R^d \mbox{ and } B \subseteq A.
\] 
Since $\varrho^N (A)$ is independent of $\varrho^N (B)$ for disjoint sets $A,B\subset \mathbb{R}^d$, by the definition of Poisson random measure (see \cite{Cin}), 
\[
\int_\mo\mb_{\Theta(w(a))^{s,+}\setminus\Theta(w(a))^{u,+}}(y-a)\,\overline{\varrho}^N(dy)=\overline{\varrho}^N\lt(\lt(\Theta(w(a))^{s,+}\setminus\Theta(w(a))^{u,+}\rt)+a\rt),
\]
is independent of all $\varrho^N (\Theta(w(a))^{s,+}\setminus\Theta(w(a))^{r,+} )$ for $r\leq u$, thus it is also independent of $\mathcal{F}^a_{u}$. This yields
\[
\mathbb{E}\left [ \int_\mo \mb_{\Theta(w(a))^{s,+}\setminus\Theta(w(a))^{u,+}}(y-a)\,\overline{\varrho}^N(dy) \ | \ \mathcal{F}^{a}_{u} \right ]=\mathbb{E}\left [ \int_\mo \mb_{\Theta(w(a))^{s,+}\setminus\Theta(w(a))^{u,+}}(y-a)\,\overline{\varrho}^N(dy)  \right ]=0.
\]
On the other hand, since
$
\int_\mo \mb_{\Theta(w(a))^{u,+}}(y-a)\overline{\varrho}^N(dy) \mbox{ is $\mathcal{F}^{a}_{u}$-measurable}$, we deduce 
\[
\mathbb{E}\left [ \int_\mo \mb_{\Theta(w(a))^{u,+}}(y-a)\,\overline{\varrho}^N(dy) \ | \ \mathcal{F}^{a}_{u} \right ]=\int_\mo \mb_{\Theta(w(a))^{u,+}}(y-a)\,\overline{\varrho}^N(dy), 
\]
and $(\mathcal{M}^{N,a}_u)_{u\geq 0}$ is a martingale. We now use Doob's inequality to obtain
	$$
	\begin{aligned}
				\mathbb{E}\lt[\sup_{u\geq 0}\left|\mathcal{M}^{N,a}_u\right|^{2m}\rt]&\leq \lt(\frac{2m}{2m-1}\rt)^{2m} \mathbb{E}\lt[\left|\mathcal{M}^{N,a}_\infty\right|^{2m}\rt]\\
				&=  \lt(\frac{2m}{2m-1}\rt)^{2m} \mathbb{E}\lt[ \lt| \varrho_N(\mo)-N\rho(\mo)  \rt|^{2m}\rt]  \\
				&= \lt(\frac{2m}{2m-1}\rt)^{2m} \mathbb{E}\lt[ \lt| K_N-N  \rt|^{2m}\rt],
	\end{aligned}
	$$
We next use a standard property of the conditional expectation to get
$$
\begin{aligned}
\mathbb{E}\lt[\sup_{u\geq 0}\lt|\mathcal{M}_u^{N,1}\rt|^{2m}\rt]&=	\mathbb{E}\lt[\mathbb{E}\lt[\sup_{u\geq 0}\left|\int_\mo \mb_{\Theta(w(Y_1))^{u,+}}(y-Y_1)(\rho^N-\rho)(dy)\right|^{2m}\ | \ Y_1 \rt]\rt]\\
&\leq \lt(\frac{2m}{2m-1}\rt)^{2m} \mathbb{E}\lt[ \lt| K_N-N  \rt|^{2m}\rt].
\end{aligned}
$$
Coming back to \eqref{eq:est_4.1}, we then find
$$\begin{aligned}
&\mathbb{E}\lt[\sup_{i=1,\cdots,N}\sup_{u\geq 0}\left|\int_\mo \mb_{\Theta(w(Y_i))^{u,+}}(y-Y_i)(\rho^N-\rho)(dy)\right|^{2m}\rt]^{\frm} \cr
&\qquad \leq 2\lt(1+\frac{2m}{2m-1}\rt) N^{-1+\frm} \E\lt[|K_N-N|^{2m}\rt]^{\frm}.
\end{aligned}$$
Note that $K_N$ is the Poisson($N$)-distributed random variable, thus it holds
\[
\E\lt[\exp\lt(2m\frac{|K_N-N|}{\sqrt{N}}\rt) \rt]\leq 2\exp\lt(N\lt(e^\frac{2m}{\sqrt{N}}-1-\frac{2m}{\sqrt{N}}\rt)\rt)\leq 2e^{(2m)^2}, 
\]
due to 
\[
N\lt(e^{\frac{2m}{\sqrt{N}}} - 1 - \frac{2m}{\sqrt{N}}\rt) = (2m)^2 \int_0^1 (1-t)e^{\frac{2m}{\sqrt{N}}t}\,dt \leq (2m)^2 \frac{\sqrt{N}}{2m}\lt(e^{\frac{2m}{\sqrt{N}}} - 1\rt) \leq (2m)^2,
\]
where we used the fact that the function $(1-t)e^{\frac{2m}{\sqrt{N}}t}$ on $[0,1]$ is bounded by $\frac{\sqrt{N}}{2m}(e^{\frac{2m}{\sqrt{N}}} - 1)$ and $e^{x-1} \leq x$ for $0 \leq x \leq 1$.
This yields 

\[
\frac{1}{N^m}\E\bigl[ |K_N-N|^{2m} \bigr]\leq \E\lt[e^{2m\frac{|K_N-N|}{\sqrt{N}}} \rt]\leq 2e^{(2m)^2}.
\]
\end{proof}

\begin{lemma}
	\label{lem:LLN}
	Let $m,N\in \mathbb{N}$ with $N\geq (2m)^2$ and $Y_1,\cdots,Y_N$ be $N$ i.i.d. random variables with the law $\rho\in \mathcal{P}(\mo)$. Then we have 
	\[
	\mathbb{E}\lt[ \sup_{i=1,\cdots,N}\lt| \int_\mo \mb_{K(w(Y_i))}(y-Y_i)\nabla\varphi(Y_i-y)(\rho^N(dy)-\rho(dy))\rt|^{2m} \rt]^{\frm} \leq \|\nabla \varphi\|_{L^{\infty}} ( 2m!)^\frm\sqrt{8m}N^{-\frac{1}{2}+\frm},
	\] 
	where $\rho^N=\frac{1}{N}\sum_{i=1}^N\delta_{Y_i}$.
\end{lemma}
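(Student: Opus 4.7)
The plan is to avoid the Poissonization/martingale trick used in Lemma \ref{lem_(De)-Poiss} (which was tailored to the continuous parameter $u$) and instead obtain the bound by a direct conditional moment inequality, since the integrand here is bounded by $\|\nabla\varphi\|_{L^\infty}$ but has no supremum over a continuous range.

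For fixed $i$, I would first condition on $Y_i$. Setting
\[
f_i(y):=\mb_{K(w(Y_i))}(y-Y_i)\nabla\varphi(Y_i-y),\qquad \bar f_i:=\int_\mo f_i(y)\,\rho(dy),
\]
the quantity inside the supremum becomes
\[
Z_i:=\int_\mo f_i(y)(\rho^N-\rho)(dy)=\frac{1}{N}\sum_{j=1}^N \xi_j,\qquad \xi_j:=f_i(Y_j)-\bar f_i,
\]
where $|\xi_j|\le 2\|\nabla\varphi\|_{L^\infty}$ almost surely. Conditional on $Y_i$, the variables $(\xi_j)_{j\ne i}$ are independent, centered, and bounded by $2\|\nabla\varphi\|_{L^\infty}$, while the remaining term $\xi_i/N$ is deterministically bounded by $2\|\nabla\varphi\|_{L^\infty}/N$.

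Next I would apply Hoeffding's lemma: conditional on $Y_i$, the partial sum $S:=\sum_{j\ne i}\xi_j$ is sub-Gaussian with variance proxy $4(N-1)\|\nabla\varphi\|_{L^\infty}^2$, and therefore
\[
\IP(|S|\ge t\mid Y_i)\le 2\exp\lt(-\frac{t^2}{8(N-1)\|\nabla\varphi\|_{L^\infty}^2}\rt).
\]
Integrating $\E[|S|^{2m}\mid Y_i]=2m\int_0^\infty t^{2m-1}\IP(|S|\ge t\mid Y_i)\,dt$ against this tail (a gamma integral) yields an explicit bound of the form
\[
\E\bigl[|S|^{2m}\mid Y_i\bigr]^{\frm}\le (2m!)^{\frm}\sqrt{2m(N-1)}\,\cdot 2\|\nabla\varphi\|_{L^\infty}.
\]
Dividing by $N$ and absorbing the contribution of $\xi_i/N$ using $N\ge (2m)^2$, this gives the per-$i$ estimate $\|Z_i\|_{L^{2m}}\le (2m!)^{\frm}\sqrt{8m/N}\,\|\nabla\varphi\|_{L^\infty}$.

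Finally I pass to the supremum by the elementary union-type bound
\[
\E\Bigl[\sup_{1\le i\le N}|Z_i|^{2m}\Bigr]\le \sum_{i=1}^N \E[|Z_i|^{2m}]\le N\,\max_i\E[|Z_i|^{2m}],
\]
whose $2m$-th root contributes an extra factor $N^{\frm}$, producing the exponent $-\tfrac12+\frm$ and matching the claimed constant $(2m!)^{\frm}\sqrt{8m}$. The only technical point is the bookkeeping of constants in the sub-Gaussian moment bound in Step 3, which must be tracked carefully so that the factor $(2m!)^\frm\sqrt{8m}$ comes out cleanly; this is routine via the closed form $\int_0^\infty t^{2m-1}e^{-at^2}dt=\Gamma(m)/(2a^m)$ together with $2\cdot m!\le (2m)!$. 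No further use of the set-valued structure of $K$ is needed here—the argument is just a Khintchine/Hoeffding-type moment inequality applied pointwise in $Y_i$, with the sup-over-$i$ handled crudely.
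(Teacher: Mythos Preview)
Your approach is correct and reaches the stated bound, but it differs from the paper's argument in the core moment estimate. Both proofs share the same outer structure: bound $\sup_i$ by $\sum_i$, fix $i$, condition on $Y_i$ so that the summands $\xi_j=f_i(Y_j)-\bar f_i$ for $j\ne i$ become independent centered bounded variables, and then control the $2m$-th moment of $\sum_j\xi_j$. Where they diverge is in that last step. The paper expands $(\sum_j h^i_j)^{2m}$ by the multinomial theorem, observes that any multi-index with a component equal to $1$ contributes zero (since $\E[h^i_j\mid Y_i]=0$), and then bounds the number of surviving multi-indices in $\mathcal{M}_1$ by a crude counting argument, $|\mathcal{M}_1|\le 2(2m)^m N^m$, together with $\binom{2m}{\alpha}\le (2m)!$. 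You instead invoke Hoeffding's sub-Gaussian tail and integrate it against $t^{2m-1}$ via the gamma integral. Your route is shorter and more standard (it is essentially Khintchine's inequality); the paper's combinatorial route is more elementary in that it avoids any exponential-moment machinery, at the cost of a somewhat clumsier constant.

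Two small remarks. First, the displayed intermediate bound $\E[|S|^{2m}\mid Y_i]^{1/(2m)}\le (2m!)^{1/(2m)}\sqrt{2m(N-1)}\cdot 2\|\nabla\varphi\|_{L^\infty}$ carries an extra factor $\sqrt{m}$ relative to what the tail integration actually gives (the gamma computation yields $2\,m!\,(2\sigma^2)^m$, hence $(2\cdot m!)^{1/(2m)}\sqrt{8(N-1)}\,\|\nabla\varphi\|_{L^\infty}$); since you then invoke $2\cdot m!\le (2m)!$ anyway, this only loosens the bound and does not affect validity. Second, $\nabla\varphi$ is $\R^d$-valued, so strictly speaking Hoeffding should be applied componentwise (or via a vector sub-Gaussian bound); the paper's proof glosses over the same point in writing $(\sum_j h^i_j)^{2m}$, so this is not a discrepancy between the two arguments.
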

\begin{proof}Let $\mathcal{M}$ be a set of $N$-dimensional multi-index of order $2m$, i.e., 
\[
\mathcal{M}:=\left \{ \alpha=(\alpha_1,\cdots,\alpha_N)\in \mathbb{N}^N : |\alpha| = \sum_{i=1}^N \alpha_i=2m\right \},
\]
and define $\mathcal{M}_1 := \lt\{ (\alpha_j)_{j}\in 	\mathcal{M} \, : \,   \alpha_j\neq 1, \,\forall \, j=1,\cdots,N \rt\}$. Let us denote by $\ell_\alpha:=\lt| \{i=1,\cdots,N \, : \, \alpha_i >0 \} \rt|$ with $\alpha \in \mathcal{M}_1$. It is clear that
$\ell_\alpha \leq m$. We now consider two functions $\phi_\alpha : \{1,\cdots,\ell_\alpha \} \to \{1,\cdots, N\}$ and $\psi_\alpha : \{1,\cdots,\ell_\alpha \} \to \{2,\cdots, 2m\}$ with $\alpha \in \mathcal{M}_1$ defined by
\[
\phi_\alpha (i)=\mbox{``the index of the $i$-th nonzero component of } \alpha"
\]
and
\[
\psi_\alpha (i)=\mbox{``the value of the $i$-th nonzero component of } \alpha",
\]
respectively.
Then we get that
$$\Phi:\mathcal{M}_1\ni \alpha\mapsto (\phi_\alpha,\psi_\alpha)\in \bigcup_{k=1}^m \{1,\cdots,N\}^{\{1,\cdots,k\}}\times \{1,\cdots,2m\}^{\{1,\cdots,k\}},$$
 is injective and thus
 $$\lt|\mathcal{M}_1 \rt|\leq \sum_{k=1}^m N^k(2m)^k \leq 2(2m)^m N^m. $$
For notational simplicity, we now set for a fixed $i \in \{1,\cdots, N\}$
\[
h^i_k:=\nabla \varphi(Y_i-Y_k)\mb_{K(w(Y_i))}(Y_k-Y_i)-\int_{\mo}\nabla \varphi(Y_i-y)\mb_{K(w(Y_i))}(y-Y_i)\rho(dy) \quad \mbox{and} \quad \mathbf{h}^i:=(h^i_1,\cdots,h^i_N).
\]
A straightforward computation yields
$$\begin{aligned}
\mathbb{E}&\lt[ \sup_{i=1,\cdots,N}\lt| \int_\mo \mb_{K(w(Y_i))}(y-Y_i)\nabla\varphi(Y_i-y)(\rho^N(dy)-\rho(dy))\rt|^{2m} \rt]^{\frm}\\
&\quad \leq  	\mathbb{E}\lt[ \lt(\sum_{i=1}^N \lt(\int_\mo \nabla \varphi(Y_i-y)\mb_{K(w(Y_i))}(y-Y_i)(\rho^N(dy)-\rho(dy))\rt)^{2m}\rt) \rt]^{\frm}\\
&\quad  =	\mathbb{E}\lt[ \lt(\sum_{i=1}^N N^{-2m}\lt(\sum_{j=1}^Nh^i_j\rt)^{2m}\rt)\rt]^{\frm}\\
& \quad = \lt(	\sum_{i=1}^N N^{-2m}\mathbb{E}\lt[ \lt(\sum_{j=1}^Nh_j^i\rt)^{2m}\rt]\rt)^{\frm}.
\end{aligned}$$
Then it follows from the law of total expectation that
$$\begin{aligned}
\mathbb{E}\lt[ \lt(\sum_{j=1}^N h^i_j\rt)^{2m}  \rt]&=\E\lt[ \mathbb{E}\lt[ \lt(\sum_{j=1}^Nh_j^i\rt)^{2m} \mid Y_i \rt]\rt]=\E\lt[ \mathbb{E}\lt[\sum_{\alpha\in \mathcal{M}}\binom{2m}{\alpha} (\mathbf{h}^i)^\alpha \mid Y_i \rt]\rt].
\end{aligned}$$
Note that $(h_k^i)^{\alpha_k}_{k=1,\cdots,N}$ conditioned on $Y_i$ are independent since $(Y_k)_{k=1,\cdots,N}$ are independent. Thus we find
\[
\mathbb{E}\lt[ \lt(\sum_{j=1}^N h^i_j\rt)^{2m}  \rt]=\sum_{\alpha\in \mathcal{M}}\binom{2m}{\alpha}\E\lt[ \prod_{j=1}^N\mathbb{E}\lt[ (h^i_j)^{\alpha_j} \mid Y_i \rt]\rt].
\]
We also notice that
\bq\label{est_ze}
\E\lt[\nabla \varphi(Y_i-Y_j)\mb_{K(w(Y_i))}(Y_j-Y_i)\, \mid Y_i\rt]=\int_{\mo}\nabla \varphi(Y_i-y)\mb_{K(w(Y_i))}(y-Y_i)\,\rho(dy), \quad \mbox{i.e.,} \quad \E\lt[h_j^i\, \mid Y_i\rt]=0.
\eq
This implies 
\[
\E\lt[ (\mathbf{h}^i)^\alpha \,|\, Y_i\rt]=0 \quad \mbox{for} \quad \alpha\notin \mathcal{M}_1.
\]
On the other hand, we obtain 
\[
\E\lt[ (\mathbf{h}^i)^\alpha \,|\,  Y_i\rt]\leq \bigl(2 \|\nabla \varphi \|_{W^{1,\infty}}  \bigr)^{2m},
\]
due to $|h_k^i|\leq 2$. This and together with \eqref{est_ze} yields
\[
\mathbb{E}\lt[ \lt(\sum_{j=1}^N h^i_j\rt)^{2m}  \rt]\leq \sup_{\alpha \in \mathcal{M}}\binom{2m}{\alpha}   \bigl(2 \|\nabla \varphi \|_{W^{1,\infty}}  \bigr)^{2m} \lt|\mathcal{M}_1\rt| \leq  \sup_{\alpha \in \mathcal{M}}\binom{2m}{\alpha}(8\|\nabla \varphi \|_{W^{1,\infty}}^2m)^mN^m, 
\]
and the result follows.

\end{proof}
We also provide a kind of weak-strong Lipschitz estimate for the velocity fields in the lemma below.	
\begin{lemma}
	\label{lem:coup_T}
Let $N\in \mathbb{N}$ and $X_1,\cdots, X_N$ be $N$ exchangeable random variables on $\mo$. Define $\mu^N=\frac{1}{N}\sum_{i=1}^N\delta_{X_i}$ the empirical measure associated. Let  $Y_1, \cdots, Y_N$ be $N$ i.i.d random variables on $\mo$ with the law $\rho\in (L^1_+\cap L^{\infty})(\mo) \cap \pp_1(\mo)$ and $\rho^N=\frac{1}{N}\sum_{i=1}^N\delta_{Y_i}$. Then there exists a constant $C > 0$ which is independent of $N$ and a random variable $H_N$ such that 
\[
 \sup_{i=1,\cdots,N} \left| V[\mu^N](X_i) - V[\rho](Y_i)\right| \leq C\left \| \rho\right \|_{L^1 \cap L^\infty}\sup_{i=1,\cdots,N}|X_i-Y_i|+ H_N,
\]
where $V$ is given in \eqref{def_V} and the set $K$ in $V$ satisfies the assumption ${\bf (H1)}$-${\bf (H2)}$. Here $H_N$ is given by
$$
H_N:=\|\nabla \varphi\|_{L^{\infty}} \sup_{i=1,\cdots,N}\sup_{u\geq 0}\left |\int_\mo \mb_{\Theta(w(Y_i))^{u,+}}(y-Y_i)(\rho^N-\rho)(dy)  \right |+ \sup_{i=1,\cdots,N}\left|V [\rho^N](Y_i)-V[\rho](Y_i)\right|
$$
and satisfies
\[
\E\bigl[ H_N^{2m}  \bigr]^\frm\leq C_m N^{-\frac{1}{2}+\frm},
\]
for $m\in \mathbb{N}$ such that $(2m)^2\leq N$, where $C_m$ is a positive constant, specified in the proof, depending on $m$, but not $N$. 
\end{lemma}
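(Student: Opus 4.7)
The plan is to insert $V[\rho^N](Y_i)$ as an intermediate term and split
\[
V[\mu^N](X_i) - V[\rho](Y_i) = \bigl(V[\mu^N](X_i) - V[\rho^N](Y_i)\bigr) + \bigl(V[\rho^N](Y_i) - V[\rho](Y_i)\bigr).
\]
The second bracket is exactly the second summand in the definition of $H_N$, so all the work is on the first. I unfold it as $\frac{1}{N}\sum_{j=1}^N \Delta_j$ and mimic the three-piece telescoping from the proof of Lemma~\ref{lem:roparg}: I write $\Delta_j = \Delta_j^{(1)}+\Delta_j^{(2)}+\Delta_j^{(3)}$, where $\Delta_j^{(1)}$ only moves the $\nabla\varphi$-argument from $(X_i-X_j)$ to $(Y_i-Y_j)$, $\Delta_j^{(2)}$ only exchanges $K(w(X_i))$ with $K(w(Y_i))$ inside the indicator, and $\Delta_j^{(3)}$ only replaces $(X_j-X_i)$ by $(Y_j-Y_i)$ inside $\mb_{K(w(Y_i))}$. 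The first term is controlled directly by $2\|\nabla\varphi\|_{Lip}\sup_{k=1,\cdots,N}|X_k-Y_k|$.

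The crux is to transfer $\Delta_j^{(2)}$ and $\Delta_j^{(3)}$ from the (unbounded-density) empirical of the $X_j$'s onto $\rho^N$. For this I rely on the elementary inclusion
\[
\mb_A(X_j-X_i)\le \mb_{A^{2\delta,+}}(Y_j-Y_i),\qquad \delta:=\sup_{k=1,\cdots,N}|X_k-Y_k|,
\]
valid for any $A\subset\R^d$, since either $Y_j-Y_i\in A$ or the segment from $X_j-X_i\in A$ to $Y_j-Y_i\notin A$ meets $\partial A$ within distance $2\delta$ of $Y_j-Y_i$. Applying this to $A=K(w(X_i))\Delta K(w(Y_i))$ in $\Delta_j^{(2)}$ and embedding the symmetric difference into $\Theta(w(Y_i))^{C|X_i-Y_i|,+}$ via ${\bf (H2)}$(iii)--(iv) and Remark~\ref{rmk_sym}; and using Lemma~\ref{lem_est2} together with ${\bf (H2)}$(i) on $\Delta_j^{(3)}$, both pieces get bounded by $\|\nabla\varphi\|_{L^\infty}\mb_{\Theta(w(Y_i))^{u,+}}(Y_j-Y_i)$ for some random $u$ of order $\sup_{k=1,\cdots,N}|X_k-Y_k|$. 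Averaging over $j$ and splitting
\[
\tfrac1N\sum_j \mb_{\Theta(w(Y_i))^{u,+}}(Y_j-Y_i) = \int \mb_{\Theta(w(Y_i))^{u,+}}(y-Y_i)\rho(dy) + \int \mb_{\Theta(w(Y_i))^{u,+}}(y-Y_i)(\rho^N-\rho)(dy),
\]
the first integral is at most $C\|\rho\|_{L^\infty}u$ by ${\bf (H2)}$(ii), yielding the $\|\rho\|_{L^1\cap L^\infty}\sup_{k=1,\cdots,N}|X_k-Y_k|$ contribution in the stated estimate, while the second, after taking $\sup_{u\ge 0}$ and $\sup_i$, is absorbed into the first summand of $H_N$.

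For the moment estimate on $H_N$, I apply the triangle inequality to its two pieces: Lemma~\ref{lem_(De)-Poiss} controls the first by $8\|\nabla\varphi\|_{L^\infty}e^{2m}N^{-\frac12+\frac1{2m}}$, and Lemma~\ref{lem:LLN} controls the second by $\|\nabla\varphi\|_{L^\infty}(2m!)^{1/2m}\sqrt{8m}\,N^{-\frac12+\frac1{2m}}$, giving $C_m = \|\nabla\varphi\|_{L^\infty}\bigl(8e^{2m}+(2m!)^{1/2m}\sqrt{8m}\bigr)=\|\nabla\varphi\|_{L^\infty}c_m$, matching the constant featured in Theorem~\ref{thm:PC_T}. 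The main technical obstacle I anticipate is the bookkeeping of the $\Theta$-enlargements in $\Delta_j^{(2)}$: one has to chain ${\bf (H2)}$(iii)--(iv) with the enlargement property $(\Theta^{a,+})^{b,+}\subseteq \Theta^{a+b,+}$ (the analogue for the $\Theta$-family of the inclusion recorded for $\partial^\e K$ right after its definition), so that at the end $Y_j-Y_i$ lies in a set of the form $\Theta(w(Y_i))^{u,+}$ with $u$ dominated by a universal multiple of $\sup_{k=1,\cdots,N}|X_k-Y_k|$, enabling the $\sup_{u\ge 0}$ built into $H_N$ to absorb every random thickness encountered along the way.
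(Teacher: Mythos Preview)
Your proposal is correct and follows essentially the same strategy as the paper's proof: insert $V[\rho^N](Y_i)$, telescope the remaining difference into three pieces, use Lemma~\ref{lem_est2} and assumptions ${\bf (H2)}$(i)--(iv) to push everything into an indicator of $\Theta(w(Y_i))^{u,+}$ evaluated at $Y_j-Y_i$, split off the $(\rho^N-\rho)$ fluctuation, and close the $H_N$ moment bound with Lemmas~\ref{lem_(De)-Poiss} and~\ref{lem:LLN}. The only difference is the order of your telescoping---you swap $K(w(X_i))\to K(w(Y_i))$ before $(X_j-X_i)\to(Y_j-Y_i)$, whereas the paper does the reverse---which is precisely why you need the extra $2\delta$-enlargement and the $(\Theta^{a,+})^{b,+}\subseteq\Theta^{a+b,+}$ step in $\Delta_j^{(2)}$ that the paper's $I_3^i$ avoids (there the argument is already $(Y_j-Y_i)$); both orders work and yield the same bound up to harmless constants.
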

\begin{proof} For notational simplicity, we denote by $A_N=\sup_{j=1,\cdots,N}|X_j-Y_j|$. For any  $i=1,\cdots,N$, we have
$$\begin{aligned}
& \left| V[\mu^N](X_i) - V[\rho](Y_i)\right| \cr
& \quad \leq  \left|V[\rho^N](Y_i)-V[\mu^N](X_i)\right| + \left|V[\rho^N](Y_i)-V[\rho](Y_i)\right| \\
& \quad \leq \left| \frac{1}{N}\sum_{j=1}^N\Bigl(\mb_{K(w(X_i))}(X_j-X_i)\nabla\varphi(X_i - X_j)-\mb_{K(w(Y_i))}(Y_j-Y_i)\nabla\varphi(Y_i-Y_j)\Bigr)\right|\\
&\qquad +\left|\frac{1}{N}\sum_{j =1}^N\mb_{K(w(Y_i))}(Y_j-Y_i)\nabla\varphi(Y_i-Y_j)-V[\rho](Y_i)\right| \cr
&\quad \leq \left| \frac{1}{N}\sum_{j=1}^N\mb_{K(w(X_i))}(X_j-X_i)\Bigl(\nabla\varphi(X_i-X_j)-\nabla\varphi(Y_i-Y_j)\Bigr)\right| \cr
&\qquad +  \left| \frac{1}{N}\sum_{j=1}^N\Bigl(\mb_{K(w(X_i))}(X_j-X_i)-\mb_{K(w(X_i))}(Y_j-Y_i)\Bigr)\nabla\varphi(Y_i-Y_j)\right| \cr
&\qquad + \lt|\frac{1}{N}\sum_{j=1}^N\Bigl(\mb_{K(w(X_i))}(Y_j-Y_i)-\mb_{K(w(Y_i))}(Y_j-Y_i)\Bigr)\nabla\varphi(Y_i-Y_j)\rt| +  \left|V [\rho^N](Y_i)-V[\rho](Y_i)\right| \cr
&\quad =: I^i_1+I^i_2+I^i_3+I^i_4.
\end{aligned}$$	
$\diamond$ Estimate of $I^i_1$: We easily find 
\[
I_1^i\leq \|\nabla\varphi\|_{W^{1,\infty}}\lt(|X_i-Y_i|+\frac{1}{N}\sum_{j=1}^N|X_j-Y_j|\rt)\leq 2 \|\nabla\varphi\|_{W^{1,\infty}}A_N \leq C\|\rho\|_{L^1 \cap L^\infty}A_N.
\]

\noindent $\diamond$ Estimate of $I^i_2$: It follows from Lemma \ref{lem_est2}, the assumptions $\bf{(H2)}$ (i) and (iv) that
\[
\begin{aligned}
\left | \mb_{K(w(X_i))}(X_j-X_i)-\mb_{K(w(X_i))}(Y_j-Y_i) \right |&\leq \mb_{\partial^{2|X_i-Y_i|}K(w(X_i))}(Y_j-Y_i)+\mb_{\partial^{2|X_j-Y_j|}K(w(X_i))}(Y_j-Y_i)\\
&\leq \mb_{\partial^{2A_N}K(w(X_i))}(Y_j-Y_i)\\
&\leq \mb_{\Theta(w(Y_i))^{2A_N+2\|w\|_{Lip}A_N,+}}(Y_j-Y_i)\\
& \leq \mb_{\Theta(w(Y_i))^{CA_N,+}}(Y_j-Y_i).
\end{aligned}
\]
Then, thanks to $\bf{(H2)}$ (ii), we obtain
$$\begin{aligned}
&\frac{1}{N}\sum_{j=1 }^N\mb_{\Theta(w(Y_i))^{CA_N,+}}(Y_j-Y_i)\cr
&\quad = \int_\mo \mb_{\Theta(w(Y_i))^{CA_N,+}}(y-Y_i)\rho^{N}(dy)\\
&\quad \leq \int_\mo \mb_{\Theta(w(Y_i))^{CA_N,+}}(y-Y_i)\rho(dy)+\left |\int_\mo \mb_{\Theta(w(Y_i))^{CA_N,+}}(y-Y_i)(\rho^N-\rho)(dy)  \right |\\
&\quad \leq C\left \| \rho \right \|_{L^1 \cap L^\infty}A_N+\sup_{u\geq 0}\left |\int_\mo \mb_{\Theta(w(Y_i))^{u,+}}(y-Y_i)(\rho^N-\rho)(dy)  \right |.
\end{aligned}$$
$\diamond$ Estimate of $I^i_3$: Using the assumptions $\bf{(H2)}$ (ii) and (iii) together with similar estimates as the above, we get
\[
\begin{aligned}
I^i_3&\leq \|\nabla\varphi\|_{L^\infty}\lt|\int_{\mo}\mb_{K(w(X_i))\Delta K(w(Y_i))}(y-Y_i)\rho^N(dy)\rt|\\
&\leq \|\nabla\varphi\|_{L^\infty}\lt|\int_{\mo}\mb_{\Theta(w(Y_i))^{C\|w\|_{Lip}|X_i-Y_i|,+}}(y-Y_i)\rho^N(dy)\rt|\\
&\leq \|\nabla\varphi\|_{L^\infty}\int_{\mo}\mb_{\Theta(w(Y_i))^{C\|w\|_{Lip}A_N,+}}(y-Y_i)\rho(dy)\cr
&\quad +\|\nabla\varphi\|_{L^\infty}	 \lt|\int_{\mo}\mb_{\Theta(w(Y_i))^{C\|w\|_{Lip}A_N,+}}(y-Y_i)(\rho^N-\rho)(dy)\rt| \\
&\leq \|\nabla\varphi\|_{L^\infty}\lt(  C\|w\|_{Lip}\|\rho\|_{L^1\cap L^\infty}A_N+ \sup_{u\geq 0}\lt|\int_{\mo}\mb_{\Theta(w(Y_i))^{u,+}}(y-Y_i)(\rho^N-\rho)(dy) \rt| \rt).
\end{aligned}
\]
$\diamond$ Estimate of $I^i_4$: It just follows from Lemma \ref{lem:LLN} that
\[
\E\lt[\sup_{i=1,\cdots,N} \bigl(I^i_4\bigr)^{2m}\rt]^\frm \leq \|\nabla \varphi\|_{L^{\infty}} ( 2m!)^\frm\sqrt{8m}N^{-\frac{1}{2}+\frm}.
\]
We then combine the above estimates and take the supremum over $i=1,\cdots,N$ to find 
\[
\sup_{i=1,\cdots,N} \left| V[\mu^N](X_i) - V[\rho](Y_i)\right| \leq C\left \| \rho\right \|_{L^1 \cap L^\infty}A_N+ H_N.
\]
Finally, we use Lemma \ref{lem_(De)-Poiss} for the second term on the right hand side of the above inequality to conclude the desired result.
\end{proof}

\subsection{Proof of Theorem \ref{thm:PC_T}}
It follows from Theorems \ref{thm_SIE} and \ref{thm_SDE} that there exist a weak solution to \eqref{sys_sde_R} and a unique pathwise solution to \eqref{sys_NLS_R} on the time interval $[0,T]$ for some $T > 0$. This implies that we are able to define solutions for those two equations on the same probability space with the same initial condition and Brownian motion. On that probability space, we define $\mu_t^N,\rho_t^N$ the empirical measures associated to system \eqref{sys_sde_R} and \eqref{sys_NLS_R} by
\[
\mu_t^N=\frac{1}{N}\sum_{i=1}^N\delta_{X_t^i} \quad \mbox{and} \quad \rho_t^N=\frac{1}{N}\sum_{i=1}^N\delta_{Y_t^i},
\]
respectively. Using the similar argument as before,  we get
$$\begin{aligned}
|X_i^t-Y_i^t|^2 &\leq  2\int_0^t \left\langle X^{i}_s-Y^{i}_s,V[\mu_s^N](X_s^i)-V[\rho_s](Y_s^i)\right\rangle \,ds\\
&\quad -2\int_0^t \left\langle X^{i}_s-Y^{i}_s, n(X_s^i)\right\rangle d|K^i|_s -2\int_0^t \left\langle Y^{i}_s-X^{i}_s, n(Y_s^i)\right\rangle d|\tilde{K}^i|_s\\
&\leq 2\int_0^t  |X^{i}_s-Y^{i}_s| \sup_{k=1,\cdots,N}|V[\mu_s^N](X_s^k)-V[\rho_s](Y_s^k)| \,ds,
\end{aligned}$$
due to the convexity of the domain $\mo$. Using Lemmas \ref{lem_gron} and \ref{lem:coup_T}, we find 
\[
\sup_{i=1,\cdots,N}|X_t^i-Y_t^i|\leq C\int_0^t \bigl(\|\rho_s\|_{L^1\cap L^{\infty}}\sup_{i=1,\cdots,N}|X_s^i-Y_s^i|+H_N\bigr) \, ds.
\]
Applying Gronwall's inequality, taking the expectation, and using Holder's inequality lead to 
\bq
\label{est_fin}
\E\lt[ \mathcal{W}_{\infty}(\mu_t^N,\rho_t^N)\rt] \leq \E\lt[ \sup_{i=1,\cdots,N}|X_t^i-Y_t^i| \rt] \leq t\exp\lt(\int_0^t \|\rho_s\|_{L^1 \cap L^\infty}\,ds\rt)\E\lt[(H_N)^{2m}\rt]^\frm.
\eq
Finally, we use the convergence estimate obtained in \cite[Theorem 1]{FG} together with the moment estimate in Remark \ref{rmk:mom} to find that for all $t\in[0,T]$
\[
\mathbb{E}\lt[\mathcal{W}_p(\rho_t^N,\rho_t)\rt] \leq C_m N^{-\frac12 + \frac{1}{2m}}+C \left\{ \begin{array}{ll}
N^{-1/2p} + N^{-(q-p)/qp} & \textrm{if $2p > d$ and $q \neq 2p$}, \\[2mm]
N^{-1/2p}\log(1+N)^{1/p} + N^{-(q-p)/qp} & \textrm{if $2p = d$ and $q \neq 2p$},\\[2mm]
N^{-1/d} + N^{-(q-p)/qp} & \textrm{if $2p < d$ and $q \neq d/(d-p)$}.
  \end{array} \right.
\]
Combining the above inequality and \eqref{est_fin} concludes the proof.

%
%
%

\appendix

%
%
%

\section{Gronwall-type inequalities}\label{app_gron}
In this appendix, we present several Gronwall-type inequalities which used for the estimates of uniform bound and stability for solutions in the current work.

\begin{lemma}\label{lem_gron} Let $f, g$ be nonnegative scalar functions. 
\begin{itemize}
\item[(i)]If $f$ satisfies
\[
f(t) \leq f_0 \exp\lt(C\int_0^t f(s)\,ds \rt), \quad t \geq 0,
\]
then we have
\[
f(t) \leq \frac{f_0}{1 - Cf_0 t}, \quad t \geq 0,
\]
where $C$ is a positive constant. 
\item[(ii)] If $f$ and $g$ satisfy
\[
f^p(t) \leq f_0^p + Cp\int_0^t \lt(g(s) f^{p-1}(s) + f^p(s)\rt)ds, \quad t \geq 0,
\]
with $p \geq 1$, then we have
\[
f(t) \leq f_0e^t + Ce^t \int_0^t g(s) e^{-s}\,ds, \quad t \geq 0,
\]
where $C$ is a positive constant. 
\end{itemize}
\end{lemma}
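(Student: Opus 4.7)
For part (i), the plan is to linearize the inequality by tracking the quantity in the exponent. I set $F(t) := C\int_0^t f(s)\,ds$, so that $F(0)=0$ and $F'(t) = Cf(t)$, and the hypothesis reads $f(t) \leq f_0 e^{F(t)}$. Multiplying by $C$ gives the ODE-type comparison $F'(t) \leq Cf_0 e^{F(t)}$, which rearranges to $-\frac{d}{dt}\bigl(e^{-F(t)}\bigr) \leq Cf_0$. Integrating from $0$ to $t$ yields $1 - e^{-F(t)} \leq Cf_0 t$, so $e^{F(t)} \leq (1 - Cf_0 t)^{-1}$ on the interval where $Cf_0 t < 1$. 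Substituting back into the hypothesis $f(t) \leq f_0 e^{F(t)}$ gives the claimed bound. The only subtlety is that $F$ is a priori only absolutely continuous, but since $f$ is locally bounded (implicit from the finiteness of the integral hypothesis), the chain rule applies in the a.e.\ sense and the argument is valid.

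For part (ii), the strategy is the same in spirit: reduce a nonlinear inequality to a standard linear Gronwall inequality by passing to the $p$-th root of the right-hand side. Define
\[
\phi(t) := f_0^p + Cp\int_0^t \bigl(g(s)f^{p-1}(s) + f^p(s)\bigr)ds,
\]
so that $f(t) \leq \phi(t)^{1/p}$, $\phi(0) = f_0^p$, and $\phi$ is absolutely continuous. Using $f \leq \phi^{1/p}$ in the pointwise a.e.\ derivative yields
\[
\phi'(t) = Cp\bigl(g(t)f^{p-1}(t) + f^p(t)\bigr) \leq Cp\bigl(g(t)\phi(t)^{(p-1)/p} + \phi(t)\bigr).
\]
Setting $\psi := \phi^{1/p}$ and using $\phi' = p\psi^{p-1}\psi'$, I divide by $p\psi^{p-1}$ to obtain the linear inequality $\psi'(t) \leq C\bigl(g(t) + \psi(t)\bigr)$ with $\psi(0) = f_0$.

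The final step is the classical integrating-factor argument: multiplying by $e^{-Ct}$ and integrating gives $\psi(t) \leq f_0 e^{Ct} + Ce^{Ct}\int_0^t e^{-Cs}g(s)\,ds$, and since $C$ denotes a generic positive constant in the paper's conventions, this is exactly the asserted bound. Combined with $f \leq \psi$, the result follows.

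The main obstacle in both parts is a small technical one rather than a conceptual one: since $f$ is not assumed differentiable, one must perform the ODE manipulation on the auxiliary quantities $F$ and $\phi$, which are absolutely continuous by construction, and only use the pointwise bound $f \leq \phi^{1/p}$ (respectively $f(t)\leq f_0 e^{F(t)}$) after taking the derivative of the auxiliary function. Once this is done carefully, both inequalities reduce to the scalar Bernoulli/linear ODE toolkit and the computations are routine.
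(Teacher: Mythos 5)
Your proof is correct and follows essentially the same route as the paper: in (i) you integrate the same Riccati-type inequality (the paper phrases it as $(h^{-1})' \geq -C$ for $h = f_0 e^{C\int_0^t f}$, you as $-\frac{d}{dt}e^{-F(t)} \leq Cf_0$, which is the identical computation), and in (ii) you differentiate the same auxiliary majorant and pass to its $p$-th root to reduce to the linear Gronwall inequality. The only cosmetic difference is that you keep the factor $e^{Ct}$ explicit before invoking the generic-constant convention, which is the same gloss the paper itself makes when it writes the conclusion with $e^t$.
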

\begin{proof}(i) Set 
\[
h(t) := f_0 \exp\lt(C\int_0^t f(s)\,ds \rt),
\]
then we get
\[
h'(t) = Ch(t) f(t) \leq Ch(t)^2 \quad \mbox{with} \quad h_0 = f_0.
\]
This yields
\[
(h^{-1}(t))' \geq -C \quad \mbox{and} \quad h(t) \leq \frac{1}{h_0^{-1} - Ct} = \frac{h_0}{1 - Ch_0t}.
\]
Thus we have
\[
f(t) \leq h(t) \leq \frac{f_0}{1 - Cf_0t}.
\]
(ii) Set
\[
F(t):= f_0^p + Cp\int_0^t \lt(g(s) f^{p-1}(s) + f^p(s)\rt)ds.
\]
Then we find that $F$ satisfies 
\[
F'(t) = Cp\lt(g(t) f^{p-1}(t) + f^p(t) \rt) \leq Cp\lt(g(t) F^{(p-1)/p}(t) + F(t) \rt) \quad \mbox{with} \quad F_0 = f_0^p. 
\]
Dividing both sides of the above inequality by $pF^{(1-p)/p}$ implies 
\[
\lt(F^{1/p}(t)\rt)' \leq Cg(t) + CF^{1/p}(t),
\]
and this gives the following inequality for $f$:
\[
f(t) \leq F^{1/p}(t) \leq F_0^{1/p} e^t + Ce^t \int_0^t g(s) e^{-s}\,ds = f_0 e^t + Ce^t \int_0^t g(s) e^{-s}\,ds.
\]
This completes the proof.
\end{proof}

%
%
%

\section{A representation for solutions to the PDE \eqref{1_pde_R}}\label{app_b}
In this part, we show that any weak solution $\rho \in L^{\infty}(0,T;\mathcal{P}(\mo))$ to \eqref{1_pde_R} can be represented as the family of time marginals of a solution to \eqref{eq:NLSDER}.
\begin{lemma}\label{lem:app_b}
For any $\rho \in L^{\infty}(0,T;\mathcal{P}(\mo))$ solution to \eqref{1_pde_R} with initial condition $\rho_0\in \mathcal{P}(\mo)$, there exists a stochastic basis, and on that basis a random variable $Y_0$ with the law $\rho_0$ and an independent Brownian motion $(B_t)_{t\in[0,T]}$ such that the solution to \eqref{eq:NLSDER} generated by the initial condition $Y_0$ and Brownian motion $(B_t)_{t\in[0,T]}$ has time marginals $\rho_t$.
\end{lemma}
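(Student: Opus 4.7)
The plan is to realize the given PDE solution $\rho$ as time marginals of a solution to the linear SDE obtained by freezing the drift. Once $\rho \in L^\infty(0,T;\mathcal{P}(\mo))$ is fixed, the field $b_t(x) := V[\rho_t](x) = \int_\mo \nabla\varphi(x-y)\mb_{K(w(x))}(y-x)\rho_t(dy)$ is a bounded measurable function of $(t,x)$ with $\|b\|_{L^\infty} \le \|\nabla\varphi\|_{L^\infty}$, independent of the value of $\rho$ at later times. Thus \eqref{eq:NLSDER} is reduced to a genuinely linear reflected SDE with bounded drift, and the nonlinear marginal constraint $\mathcal{L}(Y_t) = \rho_t$ is recovered a posteriori from uniqueness of the linear Fokker--Planck equation.

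First, I would pick a stochastic basis $(\Omega,\mathcal{F},(\mathcal{F}_t)_{t \in [0,T]},\mathbb{P})$ carrying a random variable $Y_0$ with law $\rho_0$ and an independent $d$-dimensional Brownian motion $(\bar{B}_t)$, and construct the reflected Brownian motion $(W_t,\bar K_t)$ in $\overline{\mo}$ starting from $Y_0$ via Tanaka's theorem (this is available since $\mo$ is convex). Then I would apply Girsanov's theorem exactly as in the proof of Theorem \ref{thm_SIE}: setting
\[
Z_t := \exp\!\left( \frac{1}{\sqrt{2\sigma}}\int_0^t \langle b_s(W_s), d\bar{B}_s \rangle - \frac{1}{4\sigma}\int_0^t |b_s(W_s)|^2 ds \right),
\]
Novikov's criterion holds because $\|b\|_{L^\infty} \le \|\nabla\varphi\|_{L^\infty}$, so $Z$ is a true martingale with $\mathbb{E}[Z_T]=1$. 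Under the equivalent measure $\mathbb{Q}$ with density $Z_T$, the process $B_t := \bar B_t - \tfrac{1}{\sqrt{2\sigma}}\int_0^t b_s(W_s)\,ds$ is a Brownian motion, and the same pair $(W_t,\bar K_t)$ then satisfies the reflected SDE
\[
W_t = Y_0 + \int_0^t b_s(W_s)\,ds + \sqrt{2\sigma}\, B_t - \bar K_t,
\]
with the normal reflection representation of $\bar K$ unchanged. Renaming $Y_t := W_t$ and $\widetilde K_t := \bar K_t$ gives a weak solution to \eqref{eq:NLSDER} under $(\mathbb{Q},(B_t))$.

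The remaining step is to identify the time marginals $\tilde\rho_t := \mathcal{L}^{\mathbb{Q}}(Y_t)$ with $\rho_t$. Applying Itô's formula to $\phi(Y_t)$ for any $\phi \in \mathcal{C}^\infty(\overline{\mo})$ with $\langle \nabla\phi, n\rangle = 0$ on $\partial\mo$ (exactly as in the proof of Proposition \ref{lem:EstLinf}), the boundary term against $d|\widetilde K|_s$ vanishes and taking expectation yields that $\tilde\rho$ is a weak solution of
\[
\partial_t \tilde\rho_t + \nabla\cdot(\tilde\rho_t\, b_t) = \sigma \Delta \tilde\rho_t, \qquad \tilde\rho_0 = \rho_0,
\]
with the no-flux condition on $\partial\mo$. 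Since $\rho$ itself solves the same linear equation with the same data (the velocity field $b_t = V[\rho_t]$ is now treated as a fixed external coefficient), it suffices to invoke uniqueness for this linear reflected Fokker--Planck equation with bounded measurable drift on a bounded convex domain.

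The main obstacle is precisely this uniqueness. I would establish it by a duality argument: for each $\psi \in \mathcal{C}_c^\infty(\mo)$ and each $t_0 \in (0,T]$, solve the backward adjoint problem
\[
-\partial_t u + b_t \cdot \nabla u + \sigma \Delta u = 0 \text{ in } (0,t_0)\times\mo,\qquad u(t_0,\cdot)=\psi, \qquad \langle \nabla u, n\rangle = 0 \text{ on } \partial\mo,
\]
after mollifying $b$ into $b^\delta \in \mathcal{C}^\infty$ (the mollified equation has a classical smooth solution $u^\delta$ with uniform bounds in $\delta$ on $\|u^\delta\|_\infty$ by the maximum principle and on lower-order Sobolev norms by parabolic estimates for linear equations with bounded drift). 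Testing both PDEs (for $\rho$ and for $\tilde\rho$) against $u^\delta$ and passing to the limit $\delta \to 0$ using the boundedness of $b - b^\delta$ in $L^\infty$ and its convergence in $L^1$ against the finite measures $\rho$ and $\tilde\rho$ gives $\int \psi\, d\rho_{t_0} = \int \psi\, d\tilde\rho_{t_0}$ for all such $\psi$, hence $\tilde\rho_t = \rho_t$ on $[0,T]$. Modulo this standard (but nontrivial because of the reflecting boundary) duality step, the lemma is proved.
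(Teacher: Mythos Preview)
Your route is genuinely different from the paper's. The paper does not attempt to prove uniqueness for the linear Fokker--Planck equation with the merely bounded drift $b=V[\rho]$. Instead it mollifies the given solution to $\rho^\e=\rho*\mu_\e$, observes that $\rho^\e$ solves a linear equation with the \emph{smooth} field $\mathcal V^\e_t=(\rho_t V[\rho_t])*\mu_\e/\rho^\e_t$, builds the reflected SDE with this smooth drift (for which identification of marginals with $\rho^\e$ is immediate by smooth linear uniqueness), and then passes to the limit via tightness of the laws of $(Y^\e,B)$ and Skorokhod representation. The delicate point there is the convergence $\int_0^t \mathcal V^\e_s(Y^\e_s)\,ds\to\int_0^t V[\rho_s](Y_s)\,ds$ in $L^1(\mathbb P)$, handled by approximating $V[\rho_\cdot]$ by \emph{continuous} functions in $L^1\bigl([0,T]\times\overline{\mathcal O},\rho_s(dy)\,ds\bigr)$ (density of continuous functions in $L^1$ of a Radon measure on a compact), in the spirit of Figalli and Fournier--Hauray. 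Your approach, if it went through, would be more direct and avoid the tightness/compactness machinery entirely.

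There is, however, a genuine gap in your duality step. After integrating against $u^\delta$ you need
\[
\int_0^{t_0}\!\int_{\mathcal O} (b_s-b^\delta_s)\cdot\nabla u^\delta_s\,d(\rho_s-\tilde\rho_s)\,ds\longrightarrow 0,
\]
and with uniform $L^\infty$ bounds on $\nabla u^\delta$ this reduces to $b^\delta\to b$ in $L^1\bigl((\rho_s+\tilde\rho_s)(dy)\,ds\bigr)$. But spatial mollification only yields $b^\delta\to b$ Lebesgue-a.e., and the hypothesis $\rho\in L^\infty(0,T;\mathcal P(\mathcal O))$ allows $\rho_s$ to be singular with respect to Lebesgue measure; your claimed ``convergence in $L^1$ against the finite measures $\rho$ and $\tilde\rho$'' is therefore unjustified for $\rho$. (For $\tilde\rho$ you are fine, since the Girsanov construction makes $\tilde\rho_t$ absolutely continuous with respect to the reflected heat kernel for $t>0$.) The natural repair is precisely the device the paper uses: replace mollification by a Lusin-type approximation of $b$ by continuous functions in $L^1$ of the Radon measure $(\rho_s+\tilde\rho_s)(dy)\,ds$ on the compact set $[0,T]\times\overline{\mathcal O}$, and then smooth that continuous drift to solve the backward problem. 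As written, though, your uniqueness argument does not close.
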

\begin{proof} $\textbf{Step A}$ ({\it Regularization}): Let $(\mu_{\e})_{\e>0}$ be a sequence of mollifier and define $\rho_t^\e=\rho_t*\mu_\e$. Then we can easily find that $\rho_t^\e$ satisfies the following Cauchy problem:
\begin{equation*}
\left\{ \begin{array}{ll}
\pa_t \rho^\e_t + \nabla \cdot \lt(\rho^\e_t\V^\e_t\rt) = \sigma \Delta \rho^\e_t, \quad x \in \mathcal{O}, \quad t > 0, &\\[2mm]
\displaystyle \lt\lal \sigma \nabla \rho^\e- \rho^\e\V^\e, n \rt\ral = 0 \quad \mbox{on} \quad \pa \mathcal{O}, &
\end{array} \right.
\end{equation*}
where 
\[
\V^\e_t:=\frac{(\rho_t V[\rho_t])*\mu_\e}{\rho^\e_t}.
\]
Since the vector fields $\V^\e_t$ are smooth and satisfy $\|\V_t^\e\|_{L^\infty}\leq \|V[\rho_t]\|_{L^\infty}\leq \|\nabla \varphi\|_{L^\infty}$, we can define the solution to the following SDE
\begin{equation}\label{eq:regSDE}
\left\{ \begin{array}{ll}
\displaystyle Y_t^\e = Y_0^\e+\int_0^t \V_s^\e(Y_s^\e)\,ds+\sqrt{2\sigma}B_t-K_t^\e,&  \\[3mm]
\displaystyle  K_t^\e=\int_0^t n (Y_s^\e)\,d| K^\e|_s,  \quad | K^\e|_t=\int_0^t \mb_{\pa\mathcal{O}} (Y^\e_s)\,d| K^\e|_s. & 
\end{array} \right.
\end{equation}
By applying It\^o's formula, similarly as in proof of Proposition \ref{lem:EstLinf}, we can also find the time marginals of solution to \eqref{eq:regSDE} is the solution to 
\begin{equation}\label{eq:regA}
\left\{ \begin{array}{ll}
\pa_t \overline{\rho}^\e_t + \nabla \cdot \lt(\overline{\rho}^\e_t\, \V_t^\e\rt) = \sigma \Delta \overline{\rho}^\e_t, \quad x \in \mathcal{O}, \quad t > 0, &\\[2mm]
\displaystyle \lt\lal \sigma \nabla \overline{\rho}^\e- \overline{\rho}^\e\V_t^\e, n \rt\ral = 0 \quad \mbox{on} \quad \pa \mathcal{O}. &
\end{array} \right.
\end{equation}
It is clear that there exists a unique weak solution \eqref{eq:regA} with the initial condition $\rho_0^\e$ due to the linearity together with the smoothness of the vector fields. Thus we have $\LL(Y_t^\e)=\rho_t^\e$.

\noindent $\textbf{Step B}$ ({\it Tightness}): We first show that the family $\{(Y_t^\e+K_t^\e)_{t \in [0,T]}, \e > 0 \}$ is tight. Define $K(R,A)$ as
\[
K(R,A):=\lt\{(f_t)_{t\in[0,T]} \, : \, |f(0)|\leq A  \quad \mbox{and} \quad  \sup_{0\leq s<t\leq T}  \frac{{|f(t)-f(s)|}}{|t-s|^{1/3}}\leq R  \rt\}.
\]
Note that $K(R,A)$ is compact subset of $C([0,T],\R^d)$ due to Ascoli's Theorem. On the other hand, a straightforward computation gives
\[
\lt|\int_s^t \V_u^{\e}(Y^\e_u)\,du+\sqrt{2\sigma}(B_t-B_s)\rt|\leq \lt(T^{2/3}\|\nabla \varphi\|_{L^\infty}+\sup_{0\leq s<t\leq T}\frac{|B_t-B_s|}{|t-s|^{1/3}}\rt)|t-s|^{1/3}=:U_T|t-s|^{1/3}.
\]		
This implies
\[
\IP\lt((Y_t^\e+K_t^\e)_{t\in[0,T]}\notin K(R,A)\rt)\leq \IP\lt( |Y_0^\e|\geq A\rt)+\IP\lt( U_T\geq R\rt).
\] 
Note that $U_T$ is almost surely bounded since the trajectories of the Brownian motion are almost surely $\frac{1}{3}$-H\"older. This together with the tightness of $\rho_0^\e$ yields that for any $\eta>0$ we can find some $R,A>0$ such that 
\[
\sup_{\e>0}\IP\lt((Y_t^\e+K_t^\e)_{t\in[0,T]}\notin K(R,A)\rt)\leq \eta.
\]
Thus the family of the law of the $(Y_t^\e+K_t^\e)_{t\in[0,T]}$ is tight. On the other hand, it follows from \cite[Theorem 1.1]{Li-Si} that the mapping $(w_t)_{t\in[0,T]}$ to $(x_t)_{t\in[0,T]}$ solution to the following Skorokhod problem:
\[
\left\{ \begin{array}{ll}
x_t+k_t=w_t, &\\[2mm]
\displaystyle k_t=\int_0^t h_sn(x_s)\,d|k|_s, \, |k|_t=\int_{0}^{t}\mb_{\pa \mo}(x_s)\,d|k|_s &
\end{array} \right.,		
\]
is continuous. This concludes that the family of the law of $(Y_.^\e)_{\e>0}$ is also tight. 

\noindent $\textbf{Step C}$ ({\it Passing to the limit}): Since the family of the law of $\{(Y_t^\e,B_t)_{t \in [0,T]}, \e > 0 \}$ is tight, it is possible to find a subsequence $\{(Y_t^{\e_n},B_t)_{t \in [0,T]}, \e > 0 \}$ converging in law. Furthermore, up to changing the probability space, we can assume that this convergence holds almost surely due to Skorokhod representation Theorem. For the sake of shortness of notation, we assume the probability space is unchanged. Then it is sufficient to check 
\[
\E\lt[\lt|\int_0^t \lt(\V^\e(Y_s^\e)-V[\rho_s](Y_s)\rt)ds\rt|\rt]\to 0 \quad \mbox{as} \quad \e \to 0.
\]
For this, inspired by the strategy used in \cite[Theorem 2.6]{Figalli} and \cite[Theorem B.1]{FouHau}, we introduce a sequence of continuous function $V^k_.:[0,T]\times \mo \mapsto \R^d$ converging to $V[\rho_.]$ in $L^{1}([0,T]\times \mo, \rho_s(dy)ds)$ as $k \to \infty$. This is possible since the measure $\rho_s(dy)ds$ is a Radon measure. Then we define $\V^{\e,k}_.$ by
\[
\V^{\e,k}_s=\frac{(\rho_sV^k_s)*\mu_{\e}}{\rho_s^\e}.
\]
Using this newly defined vector fields, we estimate as		  
$$\begin{aligned}
&\int_0^t\E\lt[\lt|(\V^\e(Y_s^\e))-V[\rho_s](Y_s)\rt|\rt]ds\cr
&\quad \leq \int_0^t \int_{\mo}|\V_s^{\e}(y)-\V_s^{\e,k}(y) | \rho_s^{\e}(y)\,dyds +\int_0^t \int_{\mo}|\V_s^{\e,k}(y)-V_s^{k}(y) | \rho_s^{\e}(y)\,dyds\\
&\qquad +\int_0^t \int_{\mo}|V_s^{k}(y_1)-V_s^{k}(y_2) | \,\pi_s^{\e}(dy_1,dy_2)ds +\int_0^t \int_{\mo}|V_s^{k}(y)-V[\rho_s](y) | \rho_s(dy)\,ds\\
&\quad =: I_1+I_2+I_3+I_4,
\end{aligned}$$
Here we can easily find that the terms $I_2$ and $I_4$ can be arbitrarily close to zero due to the definition of the sequence $(V^k_.)_{k\geq 0}$. For the estimate of $I_1$, we get
\[
I_1 =\int_0^t \int_{\mo}\lt|\frac{\lt[(V[\rho_s]-V^k_s)\rho_s\rt]*\mu_\e}{\rho_s^\e} \rt| \rho_s^{\e}\,dyds =\int_0^t \int_{\mo}\lt|\lt[(V[\rho_s]-V^k_s)\rho_s\rt]*\mu_\e \rt|dyds.	
\]  
Then we can again use the choice of the sequence $(V^k_.)_{k\geq 0}$ to have that $I_1$ can be arbitrarily small as $k \to \infty$. Finally, we notice that $I_3$ can be rewritten as
\[
I_3=\int_0^t\E\lt[\lt|(V^k_s(Y_s^\e))-V^k_s(Y_s)\rt|\rt]ds.
\]
Then it is clear that $I_3 \to 0$ as $\e \to 0$ since $V^k_.$ is continuous and $(Y^{\e}_t)_{t\in[0,T]}$ converges to $(Y_t)_{t\in[0,T]}$ $\IP$-a.s. This completes the proof. 
\end{proof}

%
%
%
%
\section*{Acknowledgments}
\small{YPC was partially supported by EPSRC grant EP/K008404/1, ERC-Starting grant HDSPCONTR ``High-Dimensional Sparse Optimal Control'', and Alexander Humboldt Foundation through the Humboldt Research Fellowship for Postdoctoral Researcher. YPC is also supported by National Research Foundation of Korea(NRF) grant funded by the Korea government(MSIP) (No. 2017R1C1B2012918). The authors warmly thank Professor Maxime Hauray for helpful discussion and valuable comments. The authors also acknowledge the Institut Mittag-Leffler, and particularly Professor Jos\'e A. Carrillo, where this work was partially done.}

%
%
%
%

\end{document}